\documentclass[11pt]{amsart}
\usepackage{extarrows}
\usepackage[colorlinks, citecolor=blue, dvipdfm, pagebackref]{hyperref}

\setlength{\oddsidemargin}{0cm} \setlength{\evensidemargin}{0in}
\setlength{\textwidth}{15.6cm} \setlength{\textheight}{23cm}
\setlength{\headheight}{0cm} \setlength{\topmargin}{-1cm}
\setlength{\parskip}{2mm}

\newtheorem{theorem}{Theorem}[section]
\newtheorem{lemma}[theorem]{Lemma}

\theoremstyle{definition}
\newtheorem{definition}[theorem]{Definition}
\newtheorem{question}[theorem]{Question}
\newtheorem{example}[theorem]{Example}

\newtheorem{proposition}[theorem]{Proposition}

\newtheorem{remark}[theorem]{Remark}

\theoremstyle{remark}

\newcommand{\be}{\begin{equation}}
\newcommand{\ee}{\end{equation}}
\numberwithin{equation}{section}



\begin{document}

\title{Explicit formulas for the Hattori-Stong theorem and applications}

\author{Ping Li}
\address{School of Mathematical Sciences, Fudan University, Shanghai 200433, China}

\email{pinglimath@fudan.edu.cn\\pinglimath@gmail.com}
\author{Wangyang Lin}

\address{School of Mathematical Sciences, Fudan University, Shanghai 200433, China}

\email{wylin23@m.fudan.edu.cn}
\thanks{This work was partially supported by the National
Natural Science Foundation of China (Grant No. 12371066).}
\subjclass[2010]{57R20, 05E05, 19L64, 32Q60.}


\keywords{stably almost-complex structure, Chern number, the Hattori-Stong theorem, the Hirzebruch signature formula, the Stirling number of the second kind, Bernoulli number, rational projective plane}

\begin{abstract}
We employ combinatorial techniques to present an explicit formula for the coefficients in front of Chern classes involving in the Hattori-Stong integrability conditions. We also give an evenness condition for the signature of stably almost-complex manifolds in terms of Chern numbers. As an application, it can be shown that the signature of a $2n$-dimensional stably almost-complex manifold whose possibly nonzero Chern numbers being $c_n$ and $c_ic_{n-i}$ is even, which particularly rules out the existence of such structure on rational projective planes. Some other related results and remarks are also discussed in this article.
\end{abstract}

\maketitle


\section{Introduction}\label{introduction}
Let $E$ and $F$ be two complex vector bundles over a topological space $X$, $\wedge^k(\cdot)$ the $k$-th exterior power operation and $$\wedge_t(E):=1+\sum_{k\geq1}\wedge^k(E)t^k \qquad(\text{$1:=$trivial line bundle}).$$
The operation $\wedge_t(\cdot)$ in $K$-theory satisfies
\be\label{wedge operation formula}\wedge_t(E-F)=
\wedge_t(E)[\wedge_t(F)]^{-1}\in K(X)[[t]].\ee
The $K$-theory operations $\gamma^k(\cdot)$ are defined by
\be\label{gamma operation formula}1+\sum_{k\geq1}\gamma^k(E-F)t^k:=
\wedge_{\frac{t}{1-t}}(E-F)\in K(X)[[t]].\ee
The importance of these operations
$\gamma^k(\cdot)$ can be appreciated from at least two aspects. Firstly, these $\gamma^k(\cdot)$ form power series generators over the integers for all $K$-theory operations (\cite[p.128]{At}). Secondly, if $\widetilde{E}:=E-\underline{\mathbb{C}}^{\text{dim}(E)}$, where
$\underline{\mathbb{C}}^{\text{dim}(E)}$ denotes the trivial bundle of rank $E$, then $\gamma^1(\widetilde{\xi_n}),\ldots,\gamma^n(\widetilde{\xi_n})$
generate over the integers the
$K$-ring of the classifying space $BU(n)$ in a suitable sense (\cite[p.253]{Kar}), where $\xi_n$ is the tautological $n$-plane bundle over $BU(n)$.

Let $(M,\tau)$ be a closed stably almost-complex manifold, where $\tau$ is a complex vector bundle over a closed smooth manifold $M$ such that the underlying real vector bundle $\tau_{\mathbb{R}}$ is isomorphic to the stable tangent bundle of $M$. The Chern classes of $M$ are defined to be those of $\tau$. Namely, $c_i(M):=c_i(\tau)\in H^{2i}(M;\mathbb{Z})$. When $\text{dim}(M)=2n$, with respect to the canonical orientation induced from $\tau$ the set of Chern numbers $\{c_{\lambda}[M]~|~\text{$\lambda:$ integer partitions of weight $n$}\}$ can be defined. It is well-known that two stably almost-complex manifolds are complex cobordant if and only if they have the same Chern numbers (\cite{Mi}).

The integrality of the following linear combinations of Chern numbers
\be\label{integrality}
\int_Mch\big(\gamma^{k_1}(\widetilde{\tau})\big)\cdots
ch\big(\gamma^{k_i}(\widetilde{\tau})\big)td(M)\in\mathbb{Z},
\qquad\forall~ k_1,\ldots,k_i\in\mathbb{Z}_{\geq0},\qquad
\gamma^0(\widetilde{\tau}):=1\ee
was first observed by Atiyah and Hirzebruch as corollaries of their differential Riemann-Roch theorem (\cite{AH1},\cite{AH2}), where $ch(\cdot)$ denotes the Chern character and $td(\cdot)$ the Todd polynomial. Now (\ref{integrality}) is a direct corollary of the Atiyah--Singer index theorem (\cite{AS}) as each $\gamma^k(\widetilde{E})$ is an \emph{integral} linear combination of those $\wedge^i(E)$ due to (\ref{wedge operation formula}) and (\ref{gamma operation formula}). Conversely, they conjectured in \cite{AH2} that (\ref{integrality}) give all the integral relations for Chern numbers, which was proved by Hattori and Stong independently (\cite{Ha},\cite{St1}) and is now called the Hattori--Stong theorem. More precisely, we have

\begin{theorem}[Atiyah-Hirzebruch, Hattori-Stong]\label{AHHS}
Given a positive integer $n$ and a set of integers $\{c_{\lambda}~|~\text{$\lambda:$ partitions of weight $n$}\}$, they can be realized as Chern numbers of some $2n$-dimensional stably almost-complex manifold if and only if they satisfy the integral conditions (\ref{integrality}).
\end{theorem}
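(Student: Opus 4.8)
The plan is to prove the two implications separately. The forward (``only if'') direction is the formal observation already recorded in the excerpt, while the genuine content of the theorem lies in the converse, so I would invest almost all of the work there.

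First I would dispatch necessity. By \eqref{wedge operation formula} and \eqref{gamma operation formula} each $\gamma^k(\widetilde{\tau})$ is an \emph{integral} linear combination of the genuine bundles $\wedge^i(\tau)$, hence a bona fide element of $K(M)$. Therefore $ch\big(\gamma^{k_1}(\widetilde{\tau})\big)\cdots ch\big(\gamma^{k_i}(\widetilde{\tau})\big)$ is the Chern character of an honest virtual bundle $\xi$, and $\int_M ch(\xi)\,td(M)$ is the index of the Dolbeault-type operator twisted by $\xi$; by the Atiyah--Singer index theorem this is an integer. Thus every tuple of Chern numbers realized by an actual $(M,\tau)$ satisfies \eqref{integrality}.

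For the converse I would first recast the problem lattice-theoretically. Since Chern numbers determine the complex-cobordism class (\cite{Mi}), the Chern-number map
\be
c\colon \Omega^U_{2n}\longrightarrow \bigoplus_{|\lambda|=n}\mathbb{Z},\qquad [M]\mapsto (c_{\lambda}[M])_{\lambda},
\ee
is injective, with image a sublattice $\Gamma_n$. Because products of complex projective spaces already span $\Omega^U_{2n}\otimes\mathbb{Q}$, the lattice $\Gamma_n$ has full rank $p(n)$. Writing $L_n$ for the set of integer tuples satisfying all conditions \eqref{integrality}, the necessity direction gives $\Gamma_n\subseteq L_n$, and the theorem is equivalent to the reverse inclusion $L_n\subseteq\Gamma_n$; that is, to the assertion that \eqref{integrality} generates \emph{all} integral relations among Chern numbers.

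To prove $L_n\subseteq\Gamma_n$ I would pass to $K$-theory. The key point is that the left-hand sides of \eqref{integrality} are exactly the $K$-theoretic characteristic numbers of $(M,\tau)$: under the Todd orientation $MU\to K$, pairing the $K$-theory fundamental class of $M$ against $\gamma^{k}(\widetilde{\xi_n})\in K^0\big(BU(n)\big)$ reproduces $\int_M ch\big(\gamma^{k}(\widetilde{\tau})\big)\,td(M)$, and, as recalled in the introduction, the classes $\gamma^1(\widetilde{\xi_n}),\ldots,\gamma^n(\widetilde{\xi_n})$ generate $K^0\big(BU(n)\big)$ in the appropriate completed sense. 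Hence membership in $L_n$ says precisely that the candidate tuple is hit by the $K$-theory characteristic-number homomorphism. The theorem would then follow from the structural fact that the Hurewicz map $\pi_*(MU)\to K_*(MU)$ is a monomorphism onto an explicitly described $K_*$-submodule, combined with the Conner--Floyd isomorphism $K_*(X)\cong MU_*(X)\otimes_{MU_*}K_*$, which controls denominators and forces the $K$-theoretic congruences to be complete. I expect the main obstacle to be exactly this completeness: showing that the $\gamma$-operations supply \emph{enough} congruences to detect all of $\Gamma_n$ and nothing coarser, so that no integral relation is missed. Concretely I would attack it one prime at a time, localizing at each $p$ and exploiting the explicit $p$-typical structure of $K_*(MU)$ — equivalently, matching the Bernoulli-type denominators appearing in $td(M)$ against those cleared by \eqref{integrality} — and I would realize the resulting generators by explicit manifolds such as Milnor hypersurfaces in products of projective spaces.
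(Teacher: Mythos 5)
First, a point of comparison: the paper does not prove Theorem \ref{AHHS} at all --- it is stated as a classical result and attributed to Hattori \cite{Ha} and Stong \cite{St1}, with only the ``only if'' direction explained (as a corollary of Atiyah--Singer). So there is no in-paper argument to measure your proposal against; it has to stand on its own as a proof of the Hattori--Stong theorem.

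On its own terms, your necessity argument is fine and agrees with the paper's remark: each $\gamma^k(\widetilde{\tau})$ is an integral combination of the $\wedge^i(\tau)$ by \eqref{wedge operation formula} and \eqref{gamma operation formula}, and the index theorem gives integrality. The sufficiency half, however, is a roadmap rather than a proof, and the step you defer is the entire content of the theorem. Your reduction to the lattice inclusion $L_n\subseteq\Gamma_n$ is correct (injectivity of the Chern-number map by \cite{Mi}, full rank from products of projective spaces), and you correctly identify the standard route: reinterpret \eqref{integrality} as integrality of $K$-theoretic characteristic numbers and invoke the structure of $\pi_*(MU)\to K_*(MU)$ together with Conner--Floyd. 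But the assertion that this Hurewicz map is a split monomorphism onto a direct summand of $K_*(MU)$ \emph{is} the Hattori--Stong theorem in its modern formulation; citing it as a ``structural fact'' is circular, and you explicitly acknowledge that the completeness of the congruences --- that the $\gamma$-operations cut out exactly $\Gamma_n$ and nothing coarser --- is ``the main obstacle'' you have not resolved. Two further points need care: (i) membership in $L_n$ does not by itself say the tuple is ``hit by'' the $K$-theoretic characteristic-number homomorphism, only that the relevant linear functionals take integer values on it --- identifying these two conditions requires knowing the image, which again is the theorem; and (ii) the realization step (that the lattice cut out by the congruences is actually achieved by manifolds) needs Milnor's explicit polynomial generators of $\Omega^U_*$, which you mention only in passing. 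As it stands the proposal is a correct plan with the decisive lemma missing, not a proof.
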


There are also analogies to Theorem \ref{AHHS} for smooth and spin manifolds, in which the Hirzebruch's $L$-polynomial and the $\hat{A}$-polynomial are involved (\cite{St1},\cite{St2},\cite{St}). In order to effectively apply Theorem \ref{AHHS} and these related results in concrete problems, we need, when expanding these quantities in terms of Chern and Pontrjagin classes, \emph{both closed and explicit} formulas for coefficients in front of them. Recently some considerable progress towards it has been made. Berglund and Bergstr\"{o}m gave explicit formulas in terms of multiple zeta values for the coefficients in front of Pontrjagin classes for the $L$-polynomial and the $\hat{A}$-polynomial (\cite{BB}). Combining some ideas in \cite{BB} and other combinatorial tools, the first author explored in \cite{Li} the coefficients in front of Chern classes for the complex genera including the Todd polynomial. So in order to apply Theorem \ref{AHHS}, we need to know an explicit formula for the coefficients $b_{\lambda}^{(k)}$ when expanding $ch\big(\gamma^k(\widetilde{E})\big)$ in terms of the Chern monomials $c_{\lambda}(E)$:
\be\label{coefficient in terms of Chern character}ch\big(\gamma^k(\widetilde{E})\big)=
\sum_{\lambda}b_{\lambda}^{(k)}\cdot c_{\lambda}(E),\qquad
b_{\lambda}^{(k)}\in\mathbb{Q}.\ee

\emph{Our first main result} (Theorem \ref{first main result theorem}) in this article is to give an explicit closed formula for $b_{\lambda}^{(k)}$ in (\ref{coefficient in terms of Chern character}) by further developing the techniques in \cite{Li}. \emph{Our second main result} (Theorem \ref{second main result theorem}) is to give an evenness condition in terms of Chern numbers for the signature of stably almost-complex manifolds. To this end, still applying the tools in \cite{Li}, we shall give an explicit formula for the coefficients in front of Chern numbers when expressing the signature of a stably almost-complex manifold in terms of them. As an application, we combine the first two main results to show that the signature of a $2n$-dimensional stably almost-complex manifold whose possibly nonzero Chern numbers being $c_n$ and $c_ic_{n-i}$ for some $i$ is even (Theorem \ref{third main result theorem}), which particularly excludes the existence of such a structure on rational projective planes. Some other related results and remarks are also discussed at places along the main line of this article.

The rest of this article is organized as follows. The main results as well as some necessary background notation are stated in Section \ref{main results}. Some preliminaries on symmetric functions and Stirling numbers of the second kind are presented in Section \ref{preliminaries}. Then Sections \ref{Proof of first main result}, \ref{proof of second main result} and \ref{proof of third main result} are devoted to the proofs of Theorems \ref{first main result theorem}, \ref{second main result theorem} and \ref{third main result theorem} respectively. Some remarks on integral/rational projective planes are collected in Section \ref{remarks on RPP} for the reader's convenience.

\section{Main results}\label{main results}
\subsection{Background notation}\label{notation subsection}
Before stating the main results in this article, we introduce some more notation in combinatorics.

An \emph{integer partition} $\lambda=(\lambda_1,\lambda_2,\ldots,\lambda_l)$ is a finite sequence of positive integers in non-increasing order: $\lambda_1\geq\lambda_2\geq\cdots\geq\lambda_l>0$. Denote by $l(\lambda):=l$ and $|\lambda|:=\sum_{i=1}^l\lambda_i$ and they are called the \emph{length} and \emph{weight} of the partition $\lambda$ respectively. These $\lambda_i$ are called \emph{parts} of the integer partition $\lambda$. It is also convenient to use another notation which indicates the number of times each integer appears: $\lambda=(1^{m_1(\lambda)}2^{m_2(\lambda)}\ldots)$. This means that $i$ appears $m_i(\lambda)$ times among $\lambda_1,\ldots,\lambda_{l(\lambda)}$. For example, $\lambda=(6,5,5,4,2,2,1,1,1)=(1^32^23^04^15^26^1)$. With this notation the Chern monomials $c_{\lambda}(E)$ in (\ref{coefficient in terms of Chern character}) are defined by $c_{\lambda}(E):=\prod_{i=1}^{l(\lambda)}c_{\lambda_i}(E)$.

Given a (nonempty) finite set $S$, a \emph{partition of $S$} is a collection of disjoint nonempty subsets of $S$ whose union is exactly $S$ (\cite[p.33]{Sta97}). Namely, a partition $\pi$ of $S$ is of the form $\pi=\{\pi_1,\ldots,\pi_k\}$, where each $\pi_i$ is a nonempty subset of $S$, $\pi_i\cap\pi_j=\emptyset$ for any $i\neq j$, and $\cup_i\pi_i=S$. In this case, each $\pi_i$ is called a \emph{block} of $\pi$. Let $l(\pi)=k$ and call it the \emph{length} of $\pi$. As usually $|\pi_i|$ denotes the cardinality of $\pi_i$.

Put $[n]:=\{1,\ldots,n\}$ and denote by $\Pi_n$ the set consisting of all partitions of $[n]$. For instance, $\big\{\{1,3,6\},\{2\},\{4,5\}\big\}\in\Pi_6$, whose length is $3$.

Define $S(n,k)$ to be the number of partitions of $[n]$ into exactly $k$ blocks. $S(n,k)$ is usually called \emph{a Stirling number of the second kind} (\cite[p.33]{Sta97}). Then $S(n,k)>0$ if $1\leq k\leq n$. By convention we put
\be\label{notation for stirling number}\text{$S(n,k):=0$ if $k>n$},\qquad \text{$S(n,0):=0$ if $n>0$},\qquad S(0,0):=1.\ee
For example, $S(n,1)=1$, $S(n,2)=2^{n-1}-1$, $S(n,n-1)={n\choose 2}$ and $S(n,n)=1$.

Let $\lambda$ be an integer partition and consider $\Pi_{l(\lambda)}$, which consists of all partitions of the set $[l(\lambda)]=\{1,2,\ldots,l(\lambda)\}$. If a partition $\pi=\{\pi_1,\ldots,\pi_{l(\pi)}\}\in\Pi_{l(\lambda)}$, then denote by
\be\lambda_{\pi_i}:=\sum_{j\in\pi_i}\lambda_j.\qquad\big(1\leq i\leq l(\pi)\big)\nonumber\ee

Let $B_{i}\in\mathbb{Q}_{>0}$ ($i\geq 1$) be the  \emph{Bernoulli numbers} without sign (\cite[p.281]{MS}), which are defined by
\be\label{Bernoulli}\frac{x}{\sinh(x)}=:1+\sum_{i\geq1}
\frac{(-1)^{i}(2^{2i}-2)B_{i}}{(2i)!}x^{2i},\qquad (B_1=\frac{1}{6},~B_2=\frac{1}{30},~B_3=\frac{1}{42},\ldots)\ee

\subsection{Main results}\label{main results subsection}
With the notation above in mind, we now state in this subsection the main results of this article.

The first result is the following explicit closed formula for $b^{(k)}_{\lambda}$ in (\ref{coefficient in terms of Chern character}).
\begin{theorem}\label{first main result theorem}
Let $E$ be a complex vector bundle over some topological space. 
Fix an integer partition $\lambda=(\lambda_1,\ldots,\lambda_{l(\lambda)})=
(1^{m_1(\lambda)}2^{m_2(\lambda)}\cdots)$. Then the coefficients $b_{\lambda}^{(k)}$ ($k\geq1$) in (\ref{coefficient in terms of Chern character}) are given by
\be\label{first main result formula}
\begin{split}
&\sum_{k\geq1}b^{(k)}_{\lambda}\cdot t^k\\
=&
\frac{(-1)^{|\lambda|-l(\lambda)}}{\prod_{i\geq1}m_i(\lambda)!}
\sum_{\pi\in\Pi_{l(\lambda)}}
\bigg\{\Big[\prod_{i=1}^{l(\pi)}(|\pi_i|-1)!\Big]
\Big[\prod_{i=1}^{l(\pi)}
\sum_{j=0}^{\lambda_{\pi_i}-1}
\frac{S(\lambda_{\pi_i},\lambda_{\pi_i}-j)}
{{\lambda_{\pi_i}-1\choose j}\cdot j!}(-t)^{\lambda_{\pi_i}-1-j}\Big]t^{l(\pi)}\bigg\}.
\end{split}\ee
In other words, $b^{(k)}_{\lambda}$ is the coefficient in front of $t^k$ on the right-hand side of (\ref{first main result formula}).
\end{theorem}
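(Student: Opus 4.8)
The plan is to first reduce the statement to a purely symmetric-function identity and then carry out the expansion into products of elementary symmetric polynomials by passing through power sums. Write $x_1,\dots,x_n$ for the Chern roots of $E$, so that $c_k(E)=e_k(x)$ is the $k$-th elementary symmetric polynomial and $c_\lambda(E)=\prod_i e_{\lambda_i}(x)$. Using the splitting principle together with $ch\bigl(\wedge_s(E)\bigr)=\prod_j(1+s\,e^{x_j})$ and $ch\bigl(\wedge_s(\underline{\mathbb{C}}^{n})\bigr)=(1+s)^{n}$, I would apply the defining relations (\ref{wedge operation formula}) and (\ref{gamma operation formula}) to $\widetilde E=E-\underline{\mathbb{C}}^{n}$ with $s=\frac{t}{1-t}$; since $1+s=(1-t)^{-1}$, everything collapses to the compact generating function
\[
1+\sum_{k\geq1}ch\bigl(\gamma^{k}(\widetilde E)\bigr)\,t^{k}
=\prod_{j=1}^{n}\bigl(1+t(e^{x_j}-1)\bigr).
\]
Comparing with (\ref{coefficient in terms of Chern character}) shows that $\sum_{k\geq1}b^{(k)}_\lambda t^{k}$ is exactly the coefficient of $c_\lambda=e_\lambda(x)$ when the symmetric function $\prod_j G(x_j)$, with $G(x):=1+t(e^{x}-1)$, is expanded in the basis of products of elementary symmetric polynomials.

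The combinatorial core is a general expansion lemma, valid for any $G(x)=1+\sum_{m\geq1}g_m x^m$: writing $\log G(x)=\sum_{m\geq1}\ell_m x^{m}$, the coefficient of $e_\lambda(x)$ in $\prod_j G(x_j)$ equals
\[
\frac{(-1)^{|\lambda|-l(\lambda)}}{\prod_{i\geq1}m_i(\lambda)!}
\sum_{\pi\in\Pi_{l(\lambda)}}\ \prod_{i=1}^{l(\pi)}(|\pi_i|-1)!\,\lambda_{\pi_i}\,\ell_{\lambda_{\pi_i}}.
\]
To prove this I would take logarithms, so that $\log\prod_j G(x_j)=\sum_m\ell_m p_m$ in the power sums $p_m=\sum_j x_j^{m}$, and then feed in the classical expansion of $p_m$ in the elementary symmetric polynomials obtained from $\log\bigl(\sum_{k\geq0}e_k s^{k}\bigr)=\sum_m\frac{(-1)^{m-1}}{m}p_m s^{m}$; this rewrites $\log\prod_j G$ as $\sum_\mu d_\mu e_\mu$ with $d_\mu=(-1)^{|\mu|+l(\mu)}\frac{|\mu|\,(l(\mu)-1)!}{\prod_s m_s(\mu)!}\,\ell_{|\mu|}$. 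Exponentiating and regarding $e_1,e_2,\dots$ as independent indeterminates, the coefficient of $e_\lambda$ in $\exp\bigl(\sum_\mu d_\mu e_\mu\bigr)$ is a sum over ordered tuples of nonempty partitions whose multiset union is $\lambda$. Viewing the parts of $\lambda$ as labelled objects $1,\dots,l(\lambda)$ carrying the values $\lambda_1,\dots,\lambda_{l(\lambda)}$, each such tuple is realised by $\prod_r m_r(\lambda)!\,/\,\prod_a\prod_r m_r(\mu^{(a)})!$ ordered set partitions of $[l(\lambda)]$; this multinomial count is precisely what cancels the denominators $\prod_s m_s(\mu)!$ inside $d_\mu$ and produces the global prefactor $1/\prod_i m_i(\lambda)!$, after which the $1/N!$ from the exponential turns the ordered set partitions into $\Pi_{l(\lambda)}$. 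Collecting signs via $\sum_i\lambda_{\pi_i}=|\lambda|$ and $\sum_i|\pi_i|=l(\lambda)$ yields the displayed formula.

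It then remains to specialise $G(x)=1+t(e^{x}-1)$. Expanding $\log\bigl(1+t(e^{x}-1)\bigr)=\sum_{r\geq1}\frac{(-1)^{r-1}}{r}t^{r}(e^{x}-1)^{r}$ and inserting the standard identity $(e^{x}-1)^{r}=r!\sum_{m\geq r}S(m,r)\frac{x^{m}}{m!}$ identifies the log-coefficients as $\ell_m=\sum_{r=1}^{m}\frac{(-1)^{r-1}(r-1)!}{m!}S(m,r)\,t^{r}$. A direct manipulation of binomial coefficients shows
\[
\lambda_{\pi_i}\,\ell_{\lambda_{\pi_i}}
=t\sum_{j=0}^{\lambda_{\pi_i}-1}\frac{S(\lambda_{\pi_i},\lambda_{\pi_i}-j)}{\binom{\lambda_{\pi_i}-1}{j}\,j!}(-t)^{\lambda_{\pi_i}-1-j},
\]
so that $\prod_i\lambda_{\pi_i}\ell_{\lambda_{\pi_i}}=t^{l(\pi)}\prod_i(\text{inner }j\text{-sum})$; substituting this into the general lemma reproduces (\ref{first main result formula}) verbatim.

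The main obstacle is the general expansion lemma of the second paragraph, and within it the bookkeeping of symmetry factors: one must match, block by block, the denominators $\prod_s m_s(\mu)!$ coming from the $p\to e$ transition against the multinomial count of labelled realisations, and check that the residual signs assemble into $(-1)^{|\lambda|-l(\lambda)}$. By contrast, the reduction of the first paragraph and the Stirling-number specialisation of the third are comparatively routine.
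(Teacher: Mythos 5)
Your proposal is correct, and its overall skeleton matches the paper's: reduce to the generating function $\prod_j\bigl(1+t(e^{x_j}-1)\bigr)$, extract the coefficient of $e_\lambda(\mathbf{x})$ via a sum over set partitions of $[l(\lambda)]$, and specialize using $(e^x-1)^r=r!\sum_m S(m,r)x^m/m!$. The difference lies in how the combinatorial core is handled. The paper introduces auxiliary variables $\mathbf{y}$ with $e_i(\mathbf{y})=ta_i$, invokes the identity $\prod_{i,j}(1+x_iy_j)=1+\sum_\lambda m_\lambda(\mathbf{y})e_\lambda(\mathbf{x})$ to identify $\sum_k b^{(k)}_\lambda t^k$ with the monomial symmetric function $m_\lambda(\mathbf{y})$, and then cites Doubilet's transition formula from $m_\lambda$ to power sums together with Cauchy's logarithmic-derivative lemma. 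You instead work entirely in the $\mathbf{x}$-variables: take $\log\prod_jG(x_j)=\sum_m\ell_mp_m(\mathbf{x})$, expand each $p_m$ in the $e_\mu$ via Newton's identities, and exponentiate, with the multinomial count $\prod_rm_r(\lambda)!/\prod_a\prod_rm_r(\mu^{(a)})!$ converting ordered tuples of partitions into set partitions of $[l(\lambda)]$. I checked this bookkeeping and the sign assembly $(-1)^{\sum_i(\lambda_{\pi_i}-|\pi_i|)}=(-1)^{|\lambda|-l(\lambda)}$; both are right, and since $m\ell_m$ equals the residue $\frac{1}{2\pi\sqrt{-1}}\oint\frac{Q_t'(x)}{x^mQ_t(x)}dx$, your general lemma is exactly the paper's Theorem \ref{solution to Question theorem}. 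In effect you give a self-contained exponential-formula proof of the instance of Doubilet's theorem that is needed, which makes the argument more elementary and independent of \cite{Do72}, at the cost of redoing a known transition-matrix computation; the paper's route is shorter because it outsources precisely that step. Your specialization step is also marginally cleaner: expanding $\log\bigl(1+t(e^x-1)\bigr)$ directly avoids the Stirling recurrence (\ref{identity for Stirling numbers formula2}) that the paper needs to evaluate its residue.
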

\begin{remark}\label{remarkvanishing}
Note that
the highest degree of $t$ on the RHS of (\ref{first main result formula}) is
$$l(\pi)+\sum_{i=1}^{l(\pi)}(\lambda_{\pi_i}-1)=|\lambda|$$ and hence $b^{(k)}_{\lambda}=0$ whenever $k>|\lambda|$. More vanishing information on $b^{(k)}_{\lambda}$ can be seen in Lemma \ref{lemmavanishing}.
\end{remark}

The following example is a simple illustration of Theorem \ref{first main result theorem} and it can be easily checked.
\begin{example}\label{special values of b example}
When $l(\lambda)\leq2$, the values $b^{(k)}_{\lambda}$ given by (\ref{first main result formula}) are
\be\label{special values of b}
b^{(k)}_{(i)}=\frac{(-1)^{i-k}S(i,k)\cdot(k-1)!}{(i-1)!},\qquad
b^{(k)}_{(i,i)}=\frac{1}{2\cdot(2i-1)!},\qquad
b^{(k)}_{(i,j)}=\frac{(-1)^{i+j}}{(i+j-1)!}~(i>j).\ee
\end{example}

Let $\sigma(M)$ be the signature of a closed oriented smooth manifold $M$, which is zero unless $\text{dim}(M)=4k$ for some $k\in\mathbb{Z}_{>0}$. By Hirzebruch's signature theorem (\cite{Hi1}) $\sigma(M)$ is a rational linear combination of Pontrjagin numbers. In \cite[Thm 1]{BB} an explicit and closed formula in terms of (some variant of) multiple zeta values for the coefficients in front of these Pontrjagin numbers is presented. When $M$ admits a stably almost-complex structure, Pontrjagin numbers are determined by Chern numbers and so is $\sigma(M)$. Our following second result is an evenness condition for the $\sigma(M)$ in terms of Chern numbers via an explicit formula for the coefficients in front of them.
\begin{theorem}\label{second main result theorem}
Let \be\label{expression for h} h_{2i}:=\frac{(-1)^{i}\cdot2^{2i+1}\cdot(2^{2i-1}-1)\cdot B_i}{(2i)!}\qquad\text{and}\qquad h_{2i-1}:=0\qquad(i\geq1).\ee
Let $M$ be a $4k$-dimensional closed stably almost-complex manifold with Chern numbers $c_{\lambda}[M]$ and $$\sigma(M)=:\sum_{|\lambda|=2k}h_{\lambda}\cdot c_{\lambda}[M].$$
\begin{enumerate}
\item
The coefficients $h_{\lambda}$ are given by
\be\label{signature in terms of Chern numbers}
h_{\lambda}=\frac{(-1)^{l(\lambda)}}{\prod_{i\geq1}m_i(\lambda)!}
\sum_{\pi\in\Pi_{l(\lambda)}}\Big\{(-1)^{l(\pi)}
\prod_{i=1}^{l(\pi)}\big[(|\pi_i|-1)!\cdot h_{\lambda_{\pi_i}}\big]\Big\}.\ee


\item
If the Chern numbers $c_{\lambda}[M]$ satisfy
\be\label{second main result formula}
\{\lambda~|~c_{\lambda}[M]\neq0\}\subset\{\lambda~|~\text{$m_i(\lambda)=0$
or $1$ for all $i$}\},
\ee
then $\sigma(M)$ is even.
\end{enumerate}
\end{theorem}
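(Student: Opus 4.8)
\emph{Part (1).} The plan is to read $\sigma(M)$ as the Hirzebruch $L$-genus and then reduce the general coefficient to the ``single-part'' ones. Writing $x_1,x_2,\ldots$ for the Chern roots of $\tau$, Hirzebruch's theorem gives $\sigma(M)=\int_M\prod_i\frac{x_i}{\tanh x_i}$, so as a symmetric function the $L$-genus expands as $\prod_i\frac{x_i}{\tanh x_i}=\sum_\lambda h_\lambda\,e_\lambda$ with $e_j=c_j$ and $e_\lambda=\prod_i e_{\lambda_i}$. First I would invoke the same combinatorial mechanism that proves Theorem \ref{first main result theorem} (equivalently, the exponential formula governed by the M\"obius function of the partition lattice $\Pi_{l(\lambda)}$): for \emph{any} multiplicative genus the coefficient of $e_\lambda$ is built from its single-part coefficients $h_{(j)}$ by summing over $\pi\in\Pi_{l(\lambda)}$, attaching $(|\pi_i|-1)!\,h_{(\lambda_{\pi_i})}$ to each block together with the global factor $(-1)^{l(\lambda)}(-1)^{l(\pi)}/\prod_i m_i(\lambda)!$. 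This is exactly the shape of (\ref{signature in terms of Chern numbers}), so Part (1) reduces to identifying $h_{(j)}$ with the numbers $h_j$ of (\ref{expression for h}).

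For the single-part computation set $g(x):=\log\frac{x}{\tanh x}=\sum_{i\geq1}g_{2i}x^{2i}$, an even series, and pass to power sums via $\prod_i\frac{x_i}{\tanh x_i}=\exp\big(\sum_j g_j p_j\big)$. A product $p_{j_1}\cdots p_{j_r}$ with $r\geq2$ expands into $e_\nu$'s with $l(\nu)\geq2$ and hence never meets $e_m$, so only the linear term $g_m p_m$ contributes; Newton's identity $[e_m]\,p_m=(-1)^{m-1}m$ then gives $h_{(m)}=(-1)^{m-1}m\,g_m$, which already forces $h_{(m)}=0$ for odd $m$. Differentiating yields $x\,g'(x)=1-\frac{2x}{\sinh(2x)}$; substituting $u=2x$ into the defining series (\ref{Bernoulli}) produces $2i\,g_{2i}=-(-1)^i(2^{2i}-2)B_i\,2^{2i}/(2i)!$, and using $(2^{2i}-2)2^{2i}=2^{2i+1}(2^{2i-1}-1)$ shows $h_{(2i)}=-2i\,g_{2i}$ is precisely the stated $h_{2i}$. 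This closes Part (1).

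\emph{Part (2).} Here the plan is a purely $2$-adic argument built on (\ref{signature in terms of Chern numbers}). For square-free $\lambda$ one has $\prod_i m_i(\lambda)!=1$, so $h_\lambda$ is a sum over $\pi$ of products of $l(\pi)\geq1$ block factors $(|\pi_i|-1)!\,h_{\lambda_{\pi_i}}$. The key input is the $2$-adic valuation of the single-part numbers: by von Staudt--Clausen the denominator of $B_i$ carries the prime $2$ to exactly the first power, so $v_2(B_i)=-1$, while Legendre's formula gives $v_2\big((2i)!\big)=2i-s_2(i)$ (binary digit sum, using $s_2(2i)=s_2(i)$). Feeding these into (\ref{expression for h}) and noting $(2^{2i-1}-1)$ is odd, I get $v_2(h_{2i})=(2i+1)-1-(2i-s_2(i))=s_2(i)\geq1$, together with $h_{2i-1}=0$. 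Since a block factor is nonzero only when $\lambda_{\pi_i}$ is even (so that $h_{\lambda_{\pi_i}}\neq0$), each nonzero summand has valuation at least $l(\pi)\geq1$, whence $v_2(h_\lambda)\geq1$ for every square-free $\lambda$.

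To finish I would use hypothesis (\ref{second main result formula}) to write $\sigma(M)=\sum_{\lambda\ \text{square-free}}h_\lambda\,c_\lambda[M]$ with every $c_\lambda[M]\in\mathbb{Z}$; since each $h_\lambda$ has $v_2\geq1$, the sum satisfies $v_2(\sigma(M))\geq1$, and because $\sigma(M)$ is a genuine integer it must be even. The main obstacle will be conceptual rather than computational: the individual terms $h_\lambda c_\lambda[M]$ are \emph{not} integers, since odd primes persist in the denominators of the $h_\lambda$, so one cannot argue integrally term by term. The point is that the $2$-adic valuation is blind to these odd denominators, and combined with the a priori integrality of $\sigma(M)$ it upgrades ``valuation $\geq1$'' to ``even''. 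Accordingly, the two steps requiring genuine care are pinning down $v_2(B_i)=-1$ via von Staudt--Clausen, and verifying that odd parts of $\lambda$ (which can contribute only when paired inside a block so as to keep each $\lambda_{\pi_i}$ even) never break the valuation bound $v_2(h_\lambda)\geq1$.
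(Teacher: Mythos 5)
Your proposal is correct and follows essentially the same route as the paper: part (1) reduces $h_\lambda$ to the single-part coefficients via the partition-lattice (Doubilet/M\"obius) formula and extracts $h_{2i}$ from $Q'(x)/Q(x)=\frac{1}{x}\big[1-\frac{2x}{\sinh(2x)}\big]$ for $Q(x)=x/\tanh(x)$ (the paper cites Cauchy's lemma where you use Newton's identity and the exponential formula, but these are the same computation), and part (2) is the identical $2$-adic argument giving $\nu_2(h_{2i})=wt(i)\geq1$ via von Staudt--Clausen and Legendre's formula. No gaps.
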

\begin{remark}
\begin{enumerate}
\item
The condition (\ref{second main result formula}) means that if some Chern number $c_{\lambda}[M]\neq0$, then all the parts $\lambda_i$ of this integer partition $\lambda$ must be mutually distinct.

\item
By (\ref{signature in terms of Chern numbers}), the coefficient in front of the top Chern number $c_{2k}[M]$ is $h_{(2k)}=h_{2k}$, which was known to Hirzebruch (\cite[Formula $(8)$]{Hi53}). Before the work in \cite{BB}, Fowler and Su presented a \emph{recursive} formula in \cite[Appendix]{FS} for the coefficients of $\sigma(M)$ in front of Pontrjagin numbers.
\end{enumerate}
\end{remark}

An $n$-dimensional smooth closed connected orientable manifold whose Betti numbers satisfy
$b_0=b_{\frac{n}{2}}=b_{n}=1$ and $b_i=0$ for other $i$ is called a \emph{rational projective plane} (RPP for short).
Classical such manifolds include complex, quaternionic and octonionic projective planes $\mathbb{C}P^2$, $\mathbb{H}P^2$ and $\mathbb{O}P^2$, whose dimensions are $4$, $8$ and $16$ respectively. Various constraints on the possibly existing dimensions on RPP were studied by Adem and Hirzebruch (\cite{Ade},\cite{Hi53},\cite{Hi54-1},\cite{Hi54-2}) and more details can be found in Section \ref{remarks on RPP}. It turns out that such existence problem is closely related to that of manifolds with prescribed Betti numbers (see Proposition \ref{Betti restrictions}). Recently the existence problem of RPP was systematically treated by Zhixu Su and her coauthors (\cite{Su1},\cite{Su2},\cite{KS},\cite{FS}). Moreover Su (\cite{Su3}) and Hu (\cite{Hu}) independently showed that any RPP whose dimension is larger than $4$ cannot admit any almost-complex structure.

Note that the signature of any RPP is $\pm1$ and hence odd. As a simple application of Theorems \ref{first main result theorem} and \ref{second main result theorem}, we shall show the following result, which particularly excludes the existence of \emph{stably almost-complex structure} on any RPP whose dimension is larger than $4$.
\begin{theorem}\label{third main result theorem}
Let $M$ be a $4k$\rm($k>1$\rm)-dimensional stably almost-complex manifold whose possibly nonzero Chern numbers are $c_{2k}[M]$ and $c_{i}c_{2k-i}[M]$ for some $0<i<2k$. Then $\sigma(M)$ is even.
\end{theorem}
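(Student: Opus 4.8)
The plan is to feed the hypothesis directly into the explicit signature formula (\ref{signature in terms of Chern numbers}) of Theorem~\ref{second main result theorem} and then separate the two shapes of partition that can survive. Under the assumption the only weight-$2k$ partitions $\lambda$ with $c_{\lambda}[M]\neq0$ are $\lambda=(2k)$ and $\lambda=(i,2k-i)$, so Theorem~\ref{second main result theorem}(1) collapses to
\[
\sigma(M)=h_{(2k)}\,c_{2k}[M]+h_{(i,2k-i)}\,c_{i}c_{2k-i}[M].
\]
First I would dispose of the generic case $i\neq k$: then both $(2k)$ and $(i,2k-i)$ have mutually distinct parts, so the hypothesis (\ref{second main result formula}) of Theorem~\ref{second main result theorem}(2) is satisfied and $\sigma(M)$ is even with no further work.

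The entire difficulty is concentrated in the degenerate case $i=k$, where the surviving partition $(k,k)$ has a repeated part and Theorem~\ref{second main result theorem}(2) no longer applies; this is also exactly the phenomenon that fails in dimension four (witness $\mathbb{C}P^2$, where $\sigma=1$), so the argument must use $k>1$ in an essential way. Here I would first read off from (\ref{signature in terms of Chern numbers}), via the two blocks of $\Pi_2$, the value
\[
h_{(k,k)}=\tfrac12\bigl(h_k^2-h_{2k}\bigr),
\]
noting that $h_k=0$ when $k$ is odd, so that $\sigma(M)=h_{2k}\,c_{2k}[M]+\tfrac12(h_k^2-h_{2k})\,c_k^2[M]$. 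Independently, for any closed oriented $4k$-manifold one has the purely topological congruence $\sigma(M)\equiv b_{2k}(M)\equiv\chi(M)=c_{2k}[M]\pmod 2$, since the middle intersection form has rank $b_{2k}$ and signature $\sigma$, while $\chi\equiv b_{2k}$ by Poincar\'e duality. Thus in this case the theorem becomes equivalent to the single statement that the top Chern number $c_{2k}[M]$ is even.

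To extract this parity I would invoke the Hattori--Stong integrality conditions (\ref{integrality}), expanded through the explicit coefficients $b^{(k)}_{\lambda}$ of Theorem~\ref{first main result theorem}. Because every Chern number other than $c_{2k}[M]$ and $c_k^2[M]$ vanishes, each integer of (\ref{integrality}) --- for instance the Todd genus $\int_M td(M)$, whose $c_{2k}$-coefficient is $\tfrac{(-1)^{k+1}B_k}{(2k)!}$, and the virtual holomorphic Euler characteristics $\chi^p(M)=\int_M ch(\wedge^p\widetilde{\tau})td(M)$, which are instances of (\ref{integrality}) --- reduces to an explicit rational combination $\alpha\,c_{2k}[M]+\beta\,c_k^2[M]\in\mathbb{Z}$ with $\alpha,\beta$ computable from the $b^{(k)}_{\lambda}$. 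The plan is to combine finitely many such conditions so that their joint integrality forces $2\mid c_{2k}[M]$, whence $\sigma(M)$ is even.

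The main obstacle is precisely this last step. The conditions built from the pure top-degree terms only reproduce $c_{2k}[M],c_k^2[M]\in\mathbb{Z}$ (one checks $b^{(2k)}_{(2k)}=1$ and $b^{(2k)}_{(k,k)}=0$), so the needed parity must come from the denominators created by the lower-degree parts of the Todd class interacting with $ch(\wedge^p\widetilde{\tau})$; controlling these means controlling the $2$-adic valuations of the Bernoulli numbers $B_k$ sitting in $h_{2k}$ and in the Todd coefficient above. I expect a von Staudt--Clausen style analysis of these denominators to convert the integrality statements into a congruence that rules out odd $c_{2k}[M]$, and it is exactly in this $2$-adic bookkeeping that the exclusion of $k=1$ becomes indispensable.
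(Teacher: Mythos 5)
Your treatment of the case $i\neq k$ is exactly the paper's: both partitions $(2k)$ and $(i,2k-i)$ then have distinct parts, so condition (\ref{second main result formula}) holds and Theorem \ref{second main result theorem}(2) finishes it. But for $i=k$ your argument stops at a plan rather than a proof, and this is precisely where all the content of the theorem lies. You reduce the problem to the parity of $c_{2k}[M]$ and then state that you \emph{expect} some combination of Hattori--Stong conditions and a von Staudt--Clausen analysis of Bernoulli denominators to force $2\mid c_{2k}[M]$; no such combination is exhibited, and you yourself flag this step as the main obstacle. The paper's route is concrete and different: for $k\geq3$ it applies (\ref{integrality}) to the product $ch\big(\gamma^1(\widetilde{\tau})\big)\,ch\big(\gamma^1(\widetilde{\tau})\big)\,td(M)$, whose only surviving contribution under the hypothesis is $\big(b^{(1)}_{(k)}\big)^2c_k^2[M]=c_k^2[M]/\big[(k-1)!\big]^2\in\mathbb{Z}$, so $4\mid c_k^2[M]$; combined with $h_{2k-1}=0$, $\nu_2(h_{2k})\geq1$ and $h_{(k,k)}=\tfrac12\big(h_k^2-h_{2k}\big)$, this makes $\sigma(M)=h_{2k}c_{2k}[M]+h_{(k,k)}c_k^2[M]$ even directly, with no need to control the parity of $c_{2k}[M]$ at all. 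The residual case $k=2$ is then settled by the sharper congruence $\sigma(M)\equiv(-1)^kc_{2k}[M]\pmod 4$ (Lemma \ref{signaturetopchernnumber}) together with the integrality of $\int_Mtd(M)=\tfrac{1}{720}(-c_4+3c_2^2)$; your mod-$2$ reduction alone cannot close this case, since $c_4$ odd and $c_2$ odd are jointly consistent with $16\mid(-c_4+3c_2^2)$.

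A secondary but genuine error: for a \emph{stably} almost-complex manifold the equality $\chi(M)=c_{2k}[M]$ you assert is false in general, because the Euler class is not a stable invariant of the tangent bundle. What survives is only $c_{2k}[M]\equiv w_{4k}(M)[M]\equiv\chi(M)\pmod 2$ (Stiefel--Whitney classes being stable), which rescues your mod-$2$ congruence; the mod-$4$ version needed for $k=2$ requires the paper's detour through a complex-cobordant genuinely almost-complex manifold as in the proof of Lemma \ref{signaturetopchernnumber}.
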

\begin{remark}
We would like to stress at this moment that the notion ``stably almost-complex" is strictly weaker than that of ``almost-complex". For instance, it is well-known that the sphere $S^n$ admits an almost-complex structure if and only if $n=2$ or $6$ (\cite{BS}). Nevertheless, \emph{any} $S^n$ does admit a stably almost-complex structure as its tangent bundle is stably trivial.
\end{remark}

\section{Preliminaries}\label{preliminaries}
\subsection{Symmetric functions}\label{symmetric function}
In this subsection we briefly recall three bases of the vector space consisting of symmetric functions and collect several facts related to them, which we shall apply to prove Theorem \ref{first main result theorem}. Two standard references on symmetric function theory are \cite[\S 1]{Ma95} and \cite[\S 7]{Sta99}.

Let $\mathbb{Q}[[\mathbf{x}]]=\mathbb{Q}[[x_1,x_2,\ldots]]$ be the ring of formal power series over $\mathbb{Q}$ in a countably infinite set of (commuting) variables $\mathbf{x}=(x_1,x_2,\ldots)$. An $f(\mathbf{x})=f(x_1,x_2,\ldots)\in \mathbb{Q}[[\mathbf{x}]]$ is called a \emph{symmetric function} if it satisfies
$$f(x_{\sigma(1)},x_{\sigma(2)},\ldots)=f(x_1,x_2,\ldots),
\qquad\forall~\sigma\in S_k,~\forall~k\in\mathbb{Z}_{>0}.$$
Here, if $\sigma\in S_k$, $\sigma(i)=i$ for $i>k$ is understood.

Let $\Lambda^k(\mathbf{x})$ be the vector space of symmetric functions of homogeneous degree $k$ \big($\text{deg}(x_i):=1$\big). Then the \emph{ring} of symmetric functions $\Lambda(\mathbf{x}):=\bigoplus_{k=0}^{\infty}\Lambda^k(\mathbf{x})$ ($\Lambda^0(\mathbf{x}):=\mathbb{Q}$)
consists of all symmetric functions with \emph{bounded} degree.

\begin{definition}
Let $\lambda=(\lambda_1,\ldots,\lambda_{l(\lambda)})$ be an integer partition with $|\lambda|=n$.
The $k$-th \emph{elementary} symmetric function $e_k(\mathbf{x})$ and \emph{power sum} symmetric function $p_k(\mathbf{x})$ are defined respectively by
$$e_k(\mathbf{x}):=\sum_{1\leq i_1<i_2<\cdots<i_k}x_{i_1}x_{i_2}\cdots x_{i_k},\qquad p_k(\mathbf{x}):=\sum_{i=1}^{\infty}x_i^k,$$
and
\be\label{e and p-def}e_{\lambda}(\mathbf{x}):=
\prod_{i=1}^{l(\lambda)}e_{\lambda_i}(\mathbf{x})\in\Lambda^n(\mathbf{x}),
\qquad p_{\lambda}(\mathbf{x}):=
\prod_{i=1}^{l(\lambda)}p_{\lambda_i}(\mathbf{x})\in\Lambda^n(\mathbf{x}).\ee

The \emph{monomial} symmetric function $m_{\lambda}(\mathbf{x})\in\Lambda^n(\mathbf{x})$ is defined by
$$m_{\lambda}(\mathbf{x}):=
\sum_{(\alpha_1,\alpha_2,\ldots)}x_1^{\alpha_1}x_2^{\alpha_2}\cdots,$$
where the sum is over all \emph{distinct} permutations $(\alpha_1,\alpha_2,\ldots)$ of the entries of the vector $(\lambda_1,\ldots,\lambda_{l(\lambda)},0,\ldots)$. In other words, $m_{\lambda}(\mathbf{x})$ is the \emph{smallest} symmetric function containing the monomial $x_1^{\lambda_1}x_2^{\lambda_2}\cdots x_{l(\lambda)}^{\lambda_{l(\lambda)}}.$
\end{definition}

It is well-known that (\cite[\S 1.2]{Ma95}) the three sets
$\big\{e_{\lambda}(\mathbf{x})~\big|~|\lambda|=n\big\}$, $\big\{p_{\lambda}(\mathbf{x})~\big|~|\lambda|=n\big\}$ and $\big\{m_{\lambda}(\mathbf{x})~\big|~|\lambda|=n\big\}$
are all additive bases of the vector space $\Lambda^n(\mathbf{x})$, and so it is natural to ask what the transition matrices are among these bases. In \cite{Do72} Doubilet applied the combinatorial M\"{o}bius inversion tools to give a unified and compact treatment on the entries of the transition matrices among these three bases as well as other bases (complete symmetric function, forgotten symmetric function and so on). The following transition relation will be used in our later proof (\cite[Thm 2]{Do72}).
\begin{theorem}[Doubilet]\label{transtion matrice from m to p theorem}
Let $\lambda=(\lambda_1,\ldots,\lambda_{l(\lambda)})=
(1^{m_1(\lambda)}2^{m_2(\lambda)}\cdots)$ be an integer partition. Then we have
\be\label{transition matrice from m to p formula}m_{\lambda}(\mathbf{x})=
\frac{(-1)^{l(\lambda)}}{\prod_im_i(\lambda)!}\sum_{\pi\in\Pi_{l(\lambda)}}
\Bigg\{(-1)^{l(\pi)}
\prod_{i=1}^{l(\pi)}\Big[\big(|\pi_i|-1\big)!\cdot
p_{\lambda_{\pi_i}}(\mathbf{x})\Big]\Bigg\},\ee
where the related notation involved in (\ref{transition matrice from m to p formula}) has been introduced in Section \ref{notation subsection}.
\end{theorem}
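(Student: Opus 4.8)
The plan is to realize both sides of (\ref{transition matrice from m to p formula}) as sums over set partitions via a single index-grouping identity, and then to invert that identity by Möbius inversion on the partition lattice $\Pi_{l(\lambda)}$. Write $l:=l(\lambda)$. For a set partition $\sigma\in\Pi_l$ with blocks $\sigma_1,\ldots,\sigma_{l(\sigma)}$, introduce the \emph{augmented monomial} $\widetilde{m}_\sigma(\mathbf{x}):=\sum x_{k_1}^{\lambda_{\sigma_1}}\cdots x_{k_{l(\sigma)}}^{\lambda_{\sigma_{l(\sigma)}}}$, where the sum runs over all tuples of \emph{pairwise distinct} indices $(k_1,\ldots,k_{l(\sigma)})$, and the \emph{block power product} $p_\sigma(\mathbf{x}):=\prod_{i=1}^{l(\sigma)}p_{\lambda_{\sigma_i}}(\mathbf{x})$. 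A short counting argument handles the one conversion I will actually need: for the finest partition $\hat{0}$ into singletons, each distinct monomial of $m_\lambda(\mathbf{x})$ arises from exactly $\prod_i m_i(\lambda)!$ reorderings of the distinct-index tuples (one for each way of permuting positions carrying equal exponents), so $\widetilde{m}_{\hat{0}}=\big(\prod_i m_i(\lambda)!\big)\cdot m_\lambda(\mathbf{x})$.

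Next I would establish the forward relation. Expanding $p_\sigma=\prod_{i}\big(\sum_j x_j^{\lambda_{\sigma_i}}\big)$ produces a sum over \emph{arbitrary} index tuples $(j_1,\ldots,j_{l(\sigma)})$, not necessarily distinct. Grouping the blocks of $\sigma$ according to which of them receive equal indices determines a unique coarsening $\tau\geq\sigma$ in the refinement order of $\Pi_l$, and the subsum over all tuples inducing a fixed $\tau$ is, by definition, precisely $\widetilde{m}_\tau$. This yields the clean identity
$p_\sigma=\sum_{\tau\geq\sigma}\widetilde{m}_\tau$ for every $\sigma\in\Pi_l$.

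The two facts above let me invert. Regarding $\sigma\mapsto p_\sigma$ and $\tau\mapsto\widetilde{m}_\tau$ as functions on the finite lattice $\Pi_l$, the identity $p_\sigma=\sum_{\tau\geq\sigma}\widetilde{m}_\tau$ is exactly the hypothesis of the dual Möbius inversion formula, so $\widetilde{m}_{\hat{0}}=\sum_{\pi\geq\hat{0}}\mu(\hat{0},\pi)\,p_\pi=\sum_{\pi\in\Pi_l}\mu(\hat{0},\pi)\,p_\pi$. Invoking the classical Möbius function of the partition lattice---the lower interval $[\hat{0},\pi]$ factors as $\prod_i\Pi_{|\pi_i|}$, whence $\mu(\hat{0},\pi)=\prod_{i=1}^{l(\pi)}(-1)^{|\pi_i|-1}(|\pi_i|-1)!=(-1)^{l-l(\pi)}\prod_{i=1}^{l(\pi)}(|\pi_i|-1)!$---and substituting, I then divide by $\prod_i m_i(\lambda)!$ using $\widetilde{m}_{\hat{0}}=\big(\prod_i m_i(\lambda)!\big)m_\lambda$. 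Noting $p_\pi=\prod_i p_{\lambda_{\pi_i}}$ and $(-1)^{l-l(\pi)}=(-1)^{l(\lambda)}(-1)^{l(\pi)}$ reproduces (\ref{transition matrice from m to p formula}) verbatim.

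The only genuinely nontrivial input is the evaluation $\mu_{\Pi_n}(\hat{0},\hat{1})=(-1)^{n-1}(n-1)!$ of the Möbius function of $\Pi_n$; I would cite Rota's computation, or reprove it in one line from $\sum_{\pi}\mu(\hat{0},\pi)=0$ together with the product structure of lower intervals. Everything else is bookkeeping, and the main obstacle is simply orientation: one must keep the refinement order and the direction of the dual inversion consistent, and confirm the multiplicity $\prod_i m_i(\lambda)!$ relating $\widetilde{m}_{\hat{0}}$ to $m_\lambda$, so that the signs and factorials land exactly as written.
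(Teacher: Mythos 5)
Your proof is correct: the grouping identity $p_\sigma=\sum_{\tau\geq\sigma}\widetilde{m}_\tau$ (summing over coarsenings in the refinement order), the multiplicity count $\widetilde{m}_{\hat 0}=\big(\prod_i m_i(\lambda)!\big)\,m_\lambda(\mathbf{x})$, and the evaluation $\mu(\hat 0,\pi)=(-1)^{l(\lambda)-l(\pi)}\prod_{i=1}^{l(\pi)}(|\pi_i|-1)!$ combine via dual M\"obius inversion on $\Pi_{l(\lambda)}$ to give exactly (\ref{transition matrice from m to p formula}), after the sign rewriting $(-1)^{l(\lambda)-l(\pi)}=(-1)^{l(\lambda)}(-1)^{l(\pi)}$. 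The paper itself offers no proof---it cites Doubilet, whose treatment (as the paper notes) is precisely this M\"obius-inversion argument on the partition lattice---so your route coincides with the approach of the source the paper relies on.
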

\begin{remark}
This formula has intimate relation with a well-known multiple zeta value formula due to Hoffman (see \cite[\S 3]{BB}) and this phenomenon has been extended in \cite[\S 3]{Li}.
\end{remark}

The following fact is well-known and usually attributed to Cauchy (\cite[\S 1.4]{Hi1} or \cite[Lemma 4.1]{Li})
\begin{lemma}\label{technical lemma}
Let $Q(x)=1+\sum_{i\geq1}a_ix^i$ be a formal power series.
If $a_i$ are viewed as $a_i=e_i(\mathbf{y})=e_i(y_1,y_2,\ldots),$
the $i$-th elementary symmetric functions of the variables $y_1,y_2,\ldots,$ then the power sum symmetric functions $p_i(\mathbf{y})$ are determined by $a_i=e_i(\mathbf{y})$ via
\be\label{cauchy}
\sum_{i\geq1}(-1)^{i-1}\cdot p_i(\mathbf{y})\cdot x^{i-1}=\frac{Q'(x)}{Q(x)}.
\ee
\end{lemma}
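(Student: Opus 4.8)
The plan is to interpret $Q(x)$ as the generating function of the elementary symmetric functions and to read off the claim from its logarithmic derivative. Since $a_i = e_i(\mathbf{y})$, the standard generating identity for elementary symmetric functions gives $Q(x) = \prod_{i\geq 1}(1 + y_i x)$, an equality of formal power series in $x$ whose coefficients lie in (the graded completion of) the ring $\Lambda(\mathbf{y})$. The right-hand side $Q'(x)/Q(x)$ is well-defined because $Q(0) = 1$, so $Q$ is invertible in the power series ring and its logarithmic derivative makes sense.

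First I would compute that logarithmic derivative termwise: $\frac{Q'(x)}{Q(x)} = \frac{d}{dx}\log Q(x) = \sum_{i\geq 1}\frac{y_i}{1 + y_i x}$. Next I would expand each summand as a geometric series, $\frac{y_i}{1 + y_i x} = \sum_{j\geq 0}(-1)^j y_i^{j+1} x^j$, and then interchange the two summations. For each fixed $j$ this collects the coefficient of $x^j$ into $(-1)^j\sum_{i} y_i^{j+1} = (-1)^j\, p_{j+1}(\mathbf{y})$. Relabeling $i = j+1$ then produces $\sum_{i\geq 1}(-1)^{i-1} p_i(\mathbf{y})\, x^{i-1}$, which is precisely the asserted right-hand side, so comparing the two expressions for $Q'(x)/Q(x)$ finishes the argument.

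The step requiring the most care is not a genuine obstacle but the formal justification of these manipulations in infinitely many variables. Because one only ever equates coefficients of a fixed power $x^n$, and each such coefficient is a symmetric function of bounded degree, all the sums and the interchange above are legitimate at the level of formal power series over $\Lambda(\mathbf{y})$, with no analytic convergence needed. The one subtlety worth flagging is that $p_{j+1}(\mathbf{y}) = \sum_i y_i^{j+1}$ genuinely involves infinitely many variables, so the identity lives in the completion of $\Lambda(\mathbf{y})$ rather than in $\Lambda(\mathbf{y})$ itself; this is harmless for the intended applications, where one substitutes $a_i = c_i$ and works modulo a fixed total degree.
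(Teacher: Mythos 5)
Your argument is correct and is the standard proof: writing $Q(x)=\prod_{i\geq1}(1+y_ix)$ via the generating function for the $e_i$, taking the logarithmic derivative, and expanding each $\frac{y_i}{1+y_ix}$ as a geometric series to collect the power sums. The paper offers no proof of this lemma, citing it as classical (Cauchy), so there is nothing to compare against; note only that your closing caveat about completions is unnecessary, since each $p_{j+1}(\mathbf{y})$ is homogeneous of degree $j+1$ and already lies in $\Lambda^{j+1}(\mathbf{y})$ as the paper defines it.
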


The following identity will also be used in the sequel (\cite[p.292]{Sta99}).
\begin{lemma}
Let $\mathbf{x}=(x_1,x_2,\ldots)$ and $\mathbf{y}=(y_1,y_2,\ldots)$ be two countably infinite sets of (commuting) variables $x_i$ and $y_j$. Then we have
\be\label{formula m-e relation}
\prod_{i,j\geq1}(1+x_iy_j)=1+\sum_{|\lambda|\geq 1}m_{\lambda}(\mathbf{y})
e_{\lambda}(\mathbf{x}),
\ee
where the sum is over all positive integer partitions.
\end{lemma}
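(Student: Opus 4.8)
The plan is to expand the infinite product by regarding the $x_i$ as coefficients of an elementary-symmetric generating function and then collecting the resulting $y$-monomials into monomial symmetric functions. First I would recall the standard identity $\prod_{i\geq1}(1+x_it)=\sum_{k\geq0}e_k(\mathbf{x})t^k$ in $\Lambda(\mathbf{x})[[t]]$, with the convention $e_0=1$. Substituting $t=y_j$ for each fixed $j$ and then taking the product over all $j\geq1$ rewrites the left-hand side of (\ref{formula m-e relation}) as $\prod_{j\geq1}\big(\sum_{k\geq0}e_k(\mathbf{x})\,y_j^k\big)$.

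Next I would expand this product over $j$. Choosing, for each $j$, a single summand $e_{k_j}(\mathbf{x})\,y_j^{k_j}$ and multiplying across $j$ yields $\sum_{(k_1,k_2,\ldots)}\big(\prod_{j\geq1}e_{k_j}(\mathbf{x})\big)\big(\prod_{j\geq1}y_j^{k_j}\big)$, where the sum runs over all sequences $(k_1,k_2,\ldots)$ of nonnegative integers with only finitely many nonzero entries. The key observation is that both factors depend on the sequence only through the multiset of its nonzero entries: if $\lambda$ denotes the partition obtained by arranging the nonzero $k_j$ in nonincreasing order, then $\prod_{j}e_{k_j}(\mathbf{x})=e_\lambda(\mathbf{x})$ (the zero indices contributing $e_0=1$), while the sequences producing a fixed $\lambda$ are exactly the distinct rearrangements of $(\lambda_1,\ldots,\lambda_{l(\lambda)},0,0,\ldots)$, whose associated monomials $\prod_j y_j^{k_j}$ sum precisely to $m_\lambda(\mathbf{y})$ by the very definition of the monomial symmetric function. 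Grouping the expansion by $\lambda$ therefore gives $\sum_{\lambda}e_\lambda(\mathbf{x})\,m_\lambda(\mathbf{y})$, and separating off the empty partition (contributing the constant $1$) yields exactly the right-hand side of (\ref{formula m-e relation}).

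The main point requiring care is the legitimacy of these manipulations with infinitely many variables and factors. I would address this by noting that (\ref{formula m-e relation}) is an identity of bounded-degree elements, so it suffices to verify it in each fixed total degree $N$ (counting $\deg x_i=\deg y_j=1$): only finitely many factors $(1+x_iy_j)$ and only finitely many summands $e_k(\mathbf{x})\,y_j^k$ can contribute a monomial of total degree $N$, so the infinite product is a well-defined formal power series and the rearrangement into partitions is a finite reindexing in each degree. The only genuine bookkeeping step is confirming the bijection between finitely supported sequences $(k_j)$ and pairs consisting of a partition $\lambda$ together with a distinct rearrangement of its parts, which is immediate from the definition of $m_\lambda$; once this is checked the identity follows termwise and hence in full.
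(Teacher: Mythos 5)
Your proof is correct. The paper does not actually prove this lemma at all: it simply cites it as a known identity from Stanley (\cite[p.292]{Sta99}), where it appears as a variant of the dual Cauchy identity, so there is no internal argument to compare against. Your expansion is the standard proof of that identity: applying $\prod_{i\geq1}(1+x_it)=\sum_{k\geq0}e_k(\mathbf{x})t^k$ with $t=y_j$ for each $j$, expanding the product over $j$ into a sum over finitely supported sequences $(k_j)$, and grouping sequences according to the partition $\lambda$ formed by their nonzero entries, so that the $y$-monomials attached to a fixed $\lambda$ sum to $m_\lambda(\mathbf{y})$ by the very definition of the monomial symmetric function while the $x$-factor is $e_\lambda(\mathbf{x})$. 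Your bookkeeping of the bijection between finitely supported sequences and distinct rearrangements of $(\lambda_1,\ldots,\lambda_{l(\lambda)},0,0,\ldots)$ is exactly the crux, and your degree-by-degree justification correctly disposes of the only subtlety, namely that the infinite product and the regrouping are legitimate in the ring of formal power series: in each fixed total degree only finitely many factors and terms contribute. In short, you have supplied a complete and self-contained proof of a statement the paper leaves to a reference, and it is the expected one.
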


\subsection{Stirling numbers of the second kind}
Recall the definition of the Stirling numbers of the second kind $S(n,k)$ and their convention in Section \ref{notation subsection}. For $S(n,k)$ we have the following two formulas.
\begin{lemma}\label{identity for Stirling numbers lemma}
These $S(n,k)$ satisfy
\be\label{identity for Stirling numbers formula1}
\frac{(e^x-1)^k}{k!}=\sum_{n\geq0}S(n,k)\cdot\frac{x^n}{n!},\qquad\forall~k\geq0,
\ee
and
\be
\label{identity for Stirling numbers formula2}
S(n+1,k+1)=\sum_{k\leq l\leq n}{n\choose l}S(l,k)\xlongequal{(\ref{notation for stirling number})}\sum_{0\leq l\leq n}{n\choose l}S(l,k).
\ee
\end{lemma}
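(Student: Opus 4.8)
The plan is to establish the two formulas in sequence: first prove the exponential generating function identity (\ref{identity for Stirling numbers formula1}), and then deduce the recurrence (\ref{identity for Stirling numbers formula2}) from it by differentiation. Both identities are classical, so rather than reprove them from scratch the emphasis will be on giving one clean argument for each that is manifestly consistent with the boundary conventions (\ref{notation for stirling number}).

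For (\ref{identity for Stirling numbers formula1}), the key observation is that $e^x-1=\sum_{m\geq1}x^m/m!$ is the exponential generating function enumerating a single nonempty block, the coefficient of $x^m/m!$ recording one block on $m$ labelled elements. By the product rule for exponential generating functions (the labelled product), the coefficient of $x^n/n!$ in $(e^x-1)^k$ then counts the ways to distribute $[n]$ into an \emph{ordered} sequence of $k$ nonempty blocks; since reordering the $k$ blocks produces $k!$ such sequences for each unordered partition of $[n]$ into $k$ blocks, this coefficient equals $k!\cdot S(n,k)$, and dividing by $k!$ yields (\ref{identity for Stirling numbers formula1}). Equivalently, one may expand $(e^x-1)^k=\sum_{j=0}^k{k\choose j}(-1)^{k-j}e^{jx}$ by the binomial theorem and read off the coefficient of $x^n/n!$ as $\sum_{j=0}^k{k\choose j}(-1)^{k-j}j^n$, which is the inclusion--exclusion count of surjections from $[n]$ onto $[k]$, i.e. $k!\,S(n,k)$. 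The degenerate case $k=0$ is checked directly: both sides equal $1$ under the convention $S(0,0)=1$, $S(n,0)=0$ for $n>0$.

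To obtain (\ref{identity for Stirling numbers formula2}) I would differentiate (\ref{identity for Stirling numbers formula1}) with $k$ replaced by $k+1$. On one hand,
\[
\frac{d}{dx}\,\frac{(e^x-1)^{k+1}}{(k+1)!}=\frac{(e^x-1)^k}{k!}\,e^x,
\]
so the right-hand side is the product of the EGF of $\{S(l,k)\}_l$ with the EGF $e^x$, all of whose coefficients are $1$; by the product rule the coefficient of $x^n/n!$ is $\sum_{l=0}^{n}{n\choose l}S(l,k)$. On the other hand, differentiating $\sum_{n\geq0}S(n,k+1)x^n/n!$ and reindexing gives $\sum_{n\geq0}S(n+1,k+1)x^n/n!$. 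Comparing coefficients of $x^n/n!$ yields the right-hand equality in (\ref{identity for Stirling numbers formula2}), while the summation range $k\leq l\leq n$ on the left is equivalent because $S(l,k)=0$ for $l<k$ by (\ref{notation for stirling number}). A purely combinatorial proof is also available and could be mentioned: classify partitions of $[n+1]$ into $k+1$ blocks by the set of the $l$ elements lying \emph{outside} the block containing $n+1$, which may be chosen in ${n\choose l}$ ways and then partitioned into the remaining $k$ blocks in $S(l,k)$ ways.

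There is no deep obstacle here; the only thing requiring genuine care is the consistent bookkeeping of the degenerate cases dictated by the conventions (\ref{notation for stirling number})---in particular verifying the $k=0$ case of (\ref{identity for Stirling numbers formula1}) and confirming that extending the summation in (\ref{identity for Stirling numbers formula2}) down to $l=0$ is harmless because the additional terms vanish. Once (\ref{identity for Stirling numbers formula1}) is in hand, the differentiation argument makes (\ref{identity for Stirling numbers formula2}) essentially immediate, which is why I would present the generating-function identity first.
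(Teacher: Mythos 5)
Your proposal is correct and takes essentially the same route as the paper: the paper likewise proves (\ref{identity for Stirling numbers formula2}) by differentiating (\ref{identity for Stirling numbers formula1}) and comparing coefficients of powers of $x$, and it records the same alternative combinatorial argument (counting by the block containing $n+1$, which matches your outside-the-block bookkeeping after the substitution $l\mapsto n-l$). The only difference is cosmetic: you additionally supply a proof of (\ref{identity for Stirling numbers formula1}), which the paper simply cites as well known, and you differentiate at level $k+1$ where the paper differentiates at level $k$ and then shifts indices.
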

\begin{proof}
The identity (\ref{identity for Stirling numbers formula1}) is well-known (\cite[p.34]{Sta97}). For (\ref{identity for Stirling numbers formula2}), we differentiate both sides of (\ref{identity for Stirling numbers formula1}) with respect to $x$:
\be\label{identity}
\sum_{n\geq0}S(n,k)\cdot\frac{x^{n-1}}{(n-1)!}
=e^x\cdot\frac{(e^x-1)^{k-1}}{(k-1)!}
\xlongequal{(\ref{identity for Stirling numbers formula1})}
\sum_{l,m\geq0}S(l,k-1)\cdot\frac{x^{l+m}}{l!\cdot m!}.
\ee
Comparing the coefficient in front of $x^{n-1}$ on both sides of (\ref{identity}) yields
$$S(n,k)=\sum_{k-1\leq l\leq n-1}{n-1\choose l}S(l,k-1),$$
which is $(\ref{identity for Stirling numbers formula2})$.
Alternatively, we can give a direct combinatorial proof for (\ref{identity for Stirling numbers formula2}) as follows. There are $S(n+1,k+1)$ ways to partition $[n+1]$ into $k+1$ blocks. We consider the block containing the element $n+1$. Assume that this block has $l+1$ elements ($l\geq0$). We have ${n\choose l}$ ways to choose this block and in each way we can partition the resulting $n-l$ elements into $k$ blocks in $S(n-l,k)$ ways. So
$$S(n+1,k+1)=\sum_{0\leq l\leq n}{n\choose l}S(n-l,k)=
\sum_{k\leq l\leq n}{n\choose l}S(l,k).$$
\end{proof}

\section{Proof of Theorem \ref{first main result theorem}}\label{Proof of first main result}
Let $\mathbf{x}=(x_1,x_2,\ldots)$ be the \emph{formal Chern roots} of a complex vector bundle $E$. Usually finite variables whose number is no less than the rank of $E$ are enough, but here in order to being consistent with symmetric function theory, we take infinite variables instead. This means that  $e_k(\mathbf{x})=c_k(E)$. In this notation it turns out that (\cite[p.268]{St1})
\be\label{symmetric function of gamma}
ch\big(\gamma^k(\widetilde{E})\big)=e_k(e^{\mathbf{x}}-1):=
e_k(e^{x_1}-1,e^{x_2}-1,\ldots)
\ee
and hence determining the coefficients $b^{(k)}_{\lambda}$ in (\ref{coefficient in terms of Chern character}) can be translated into a pure combinatorial problem, i.e., the coefficients $b^{(k)}_{\lambda}$ determined by
\be\label{coefficient in terms of symmetric function}e_k(e^{\mathbf{x}}-1)=:
\sum_{\lambda}b_{\lambda}^{(k)}\cdot e_{\lambda}(\mathbf{x}).\ee

Note that the exponential function $e^x$ is a monic formal power series. Thus we may broaden our scope by considering the following question, which may be of independent interest in combinatorics and related topics.
\begin{question}\label{question}
Let $Q(x)=1+\sum_{i\geq1}a_i\cdot x^i$ be a monic formal power series. Consider the symmetric function $e_k\big(Q(\mathbf{x})-1\big):=e_k\big(Q(x_1)-1,Q(x_2)-1,\ldots\big)$
and
\be\label{coefficient for general}
e_k\big(Q(\mathbf{x})-1\big)=:\sum_{\lambda}b^{(k)}_{\lambda}(Q)\cdot e_{\lambda}(\mathbf{x}).
\ee
What is the closed formula for $b^{(k)}_{\lambda}(Q)$?
\end{question}

Our solution to Question \ref{question} is

\begin{theorem}\label{solution to Question theorem}
Let the notation be as above and $Q_t(x):=1+\sum_{i\geq1}(ta_i)x^i$. For each integer partition $\lambda=(\lambda_1,\ldots,\lambda_{l(\lambda)})=
(1^{m_1(\lambda)}2^{m_2(\lambda)}\cdots)$ we have
\be\label{solution to Question formula}
\sum_{k\geq1}b^{(k)}_{\lambda}(Q)\cdot t^k=
\frac{(-1)^{|\lambda|-l(\lambda)}}{\prod_{i\geq1}m_i(\lambda)!}
\sum_{\pi\in\Pi_{l(\lambda)}}
\bigg\{\big[\prod_{i=1}^{l(\pi)}(|\pi_i|-1)!\big]
\Big[\prod_{i=1}^{l(\pi)}\big(\frac{1}{2\pi\sqrt{-1}}
\oint\frac{Q'_t(x)}{x^{\lambda_{\pi_i}}Q_t(x)}dx\big)
\Big]\bigg\}.
\ee
\end{theorem}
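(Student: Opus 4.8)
The plan is to take the generating function of the whole family $\{e_k(Q(\mathbf{x})-1)\}_k$ in the variable $t$, recognize it as a Cauchy-type infinite product, and then match it term by term against the three facts already assembled in Section \ref{preliminaries}. First I would collapse the $k$-family: using the elementary generating identity $\sum_{k\geq 0}e_k(\mathbf{z})\,t^k=\prod_j(1+tz_j)$ with $z_j=Q(x_j)-1=\sum_{i\geq 1}a_ix_j^i$, and observing that $1+t\bigl(Q(x_j)-1\bigr)=1+\sum_{i\geq 1}(ta_i)x_j^i=Q_t(x_j)$, I obtain $\sum_{k\geq 0}e_k(Q(\mathbf{x})-1)\,t^k=\prod_j Q_t(x_j)$. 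By the defining relation (\ref{coefficient for general}) this same series equals $\sum_\lambda\bigl(\sum_k b^{(k)}_\lambda(Q)\,t^k\bigr)e_\lambda(\mathbf{x})$, so the task reduces to expanding $\prod_j Q_t(x_j)$ in the basis $\{e_\lambda(\mathbf{x})\}$ and reading off the coefficient of each $e_\lambda(\mathbf{x})$.

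The key device is to introduce auxiliary variables $\mathbf{y}=(y_1,y_2,\ldots)$ whose elementary symmetric functions realize the coefficients of $Q_t$, i.e. $e_i(\mathbf{y})=ta_i$, equivalently $Q_t(x)=\prod_m(1+y_mx)$; these are precisely the formal roots underlying Lemma \ref{technical lemma} applied to $Q_t$, and they depend on $t$. Then $\prod_j Q_t(x_j)=\prod_{j,m}(1+x_jy_m)$, which is exactly the left-hand side of (\ref{formula m-e relation}). Applying (\ref{formula m-e relation}) gives $\prod_j Q_t(x_j)=1+\sum_{|\lambda|\geq 1}m_\lambda(\mathbf{y})\,e_\lambda(\mathbf{x})$, and comparing coefficients of $e_\lambda(\mathbf{x})$ (which form a basis, so the comparison is legitimate) yields the compact intermediate identity $\sum_{k\geq 1}b^{(k)}_\lambda(Q)\,t^k=m_\lambda(\mathbf{y})$.

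It then remains only to rewrite $m_\lambda(\mathbf{y})$ as the stated sum of contour integrals, which is pure bookkeeping. I would feed $m_\lambda(\mathbf{y})$ through Doubilet's formula (\ref{transition matrice from m to p formula}), expressing it as $\frac{(-1)^{l(\lambda)}}{\prod_i m_i(\lambda)!}\sum_{\pi}(-1)^{l(\pi)}\prod_i\bigl[(|\pi_i|-1)!\,p_{\lambda_{\pi_i}}(\mathbf{y})\bigr]$, and then replace each power sum using Lemma \ref{technical lemma}: applying (\ref{cauchy}) to $Q_t$ gives $(-1)^{m-1}p_m(\mathbf{y})=[x^{m-1}]\frac{Q_t'(x)}{Q_t(x)}=\frac{1}{2\pi\sqrt{-1}}\oint\frac{Q_t'(x)}{x^mQ_t(x)}\,dx$, so each $p_{\lambda_{\pi_i}}(\mathbf{y})$ becomes the corresponding contour integral, carrying the sign $(-1)^{\lambda_{\pi_i}-1}$.

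Finally I would collect the signs. For a fixed $\pi$ the signs from the power sums multiply to $\prod_i(-1)^{\lambda_{\pi_i}-1}=(-1)^{|\lambda|-l(\pi)}$, since $\sum_i\lambda_{\pi_i}=|\lambda|$ and $\pi$ has $l(\pi)$ blocks; combining this with the factors $(-1)^{l(\lambda)}$ and $(-1)^{l(\pi)}$ already present in (\ref{transition matrice from m to p formula}) cancels the $\pi$-dependent sign and leaves exactly the global factor $(-1)^{|\lambda|-l(\lambda)}$, which produces (\ref{solution to Question formula}). I expect the only real obstacle to be the bookkeeping rather than any new idea: one must justify that $\prod_{j,m}(1+x_jy_m)$ is a legitimate formal object so that (\ref{formula m-e relation}) applies, keep in mind that the $y_m$ — and hence $m_\lambda(\mathbf{y})$ and $p_m(\mathbf{y})$ — are power series in $t$ rather than homogeneous in it, and track the three separate sign contributions exactly. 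The single genuinely inventive step is the auxiliary-variable substitution that turns $\prod_j Q_t(x_j)$ into the Cauchy product $\prod_{j,m}(1+x_jy_m)$; everything after it is driven by the already-cited lemmas.
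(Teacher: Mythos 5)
Your proposal is correct and follows essentially the same route as the paper's own proof: the generating-function identity $\sum_k e_k(Q(\mathbf{x})-1)t^k=\prod_j Q_t(x_j)$, the auxiliary variables $\mathbf{y}$ with $e_i(\mathbf{y})=ta_i$ turning this into the Cauchy product $\prod_{j,m}(1+x_jy_m)$, expansion via (\ref{formula m-e relation}) to get $\sum_k b^{(k)}_\lambda(Q)t^k=m_\lambda(\mathbf{y})$, and then Doubilet's formula (\ref{transition matrice from m to p formula}) combined with Lemma \ref{technical lemma} and the residue formula, with the identical sign bookkeeping $(-1)^{l(\lambda)}(-1)^{l(\pi)}(-1)^{|\lambda|-l(\pi)}=(-1)^{|\lambda|-l(\lambda)}$. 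No gaps to report.
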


\begin{proof}
View $ta_i=:e_i(\mathbf{y})$, the $i$-th elementary symmetric function of the variables $\mathbf{y}=(y_1,y_2,\ldots)$. Then
\be\label{id1}
Q_t(x)=\prod_{i\geq1}(1+y_ix)
\ee
and
\be\label{id2}\begin{split}
1+\sum_{|\lambda|\geq1}\big(\sum_{k\geq1}b^{(k)}_{\lambda}(Q)\cdot t^k\big)e_{\lambda}(\mathbf{x})&=
1+\sum_{k\geq1}e_k\big(Q(\mathbf{x})-1\big)t^k
\qquad\big(\text{by (\ref{coefficient for general})}\big)\\
&=\prod_{i\geq1}\bigg[1+\big(Q(x_i)-1\big)t\bigg]\\
&=\prod_{i\geq1}Q_t(x_i)\\
&=\prod_{i,j\geq1}(1+x_iy_j)\qquad\big(\text{by (\ref{id1})}\big)\\
&=1+\sum_{|\lambda|\geq1}m_{\lambda}(\mathbf{y})e_{\lambda}(\mathbf{x}).
\qquad\big(\text{by (\ref{formula m-e relation})}\big)
\end{split}\ee
Comparing both sides of (\ref{id2}) yields
\be\label{id3}\begin{split}
\sum_{k\geq1}b^{(k)}_{\lambda}(Q)\cdot t^k&=m_{\lambda}(\mathbf{y})\\
&=\frac{(-1)^{l(\lambda)}}{\prod_im_i(\lambda)!}\sum_{\pi\in\Pi_{l(\lambda)}}
\Bigg\{(-1)^{l(\pi)}
\prod_{i=1}^{l(\pi)}\Big[\big(|\pi_i|-1\big)!\cdot
p_{\lambda_{\pi_i}}(\mathbf{y})\Big]\Bigg\}.\quad\big(\text{by (\ref{transition matrice from m to p formula})}\big)
\end{split}\ee
By (\ref{cauchy}) we have
\be
\begin{split}
(-1)^{\lambda_{\pi_i}-1}p_{\lambda_{\pi_i}}(\mathbf{y})&=
\text{the coefficient of $x^{\lambda_{\pi_i}-1}$ in $\frac{Q'_t(x)}{Q_t(x)}$}\\
&=\frac{1}{2\pi\sqrt{-1}}
\oint\frac{Q'_t(x)}{x^{\lambda_{\pi_i}}Q_t(x)}dx\qquad
(\text{the residue formula})
\end{split}\nonumber
\ee
and therefore
\be\label{id4}
\prod_{i=1}^{l(\pi)}
p_{\lambda_{\pi_i}}(\mathbf{y})=
(-1)^{|\lambda|-l(\pi)}\prod_{i=1}^{l(\pi)}\big(\frac{1}{2\pi\sqrt{-1}}
\oint\frac{Q'_t(x)}{x^{\lambda_{\pi_i}}Q_t(x)}dx\big).
\ee
Putting (\ref{id4}) into (\ref{id3}) leads to the desired (\ref{solution to Question formula}).
\end{proof}

In view of (\ref{first main result formula}) and (\ref{solution to Question formula}), Theorem \ref{first main result theorem} follows from the following
\begin{lemma}
We have
\be\label{id5}
\frac{1}{2\pi\sqrt{-1}}
\oint\frac{Q'_t(x)}{x^kQ_t(x)}dx
\xlongequal[]{Q(x)=e^x}t\sum_{i=0}^{k-1}\frac{S(k,k-i)}{{k-1\choose i}\cdot i!}(-t)^{k-1-i}.\nonumber
\ee
\begin{proof}
In this case $Q_t(x)=1+t(e^x-1)$. For simplicity we use $\langle x^i\rangle f(x)$ to denote the coefficient in front of $x^i$ in $f(x)$. Then
\be\begin{split}
\frac{1}{2\pi\sqrt{-1}}
\oint\frac{Q'_t(x)}{x^kQ_t(x)}dx&=\frac{1}{2\pi\sqrt{-1}}
\oint\frac{te^x}{x^k\big[1+t(e^x-1)\big]}dx\\
&=\langle x^{k-1}\rangle te^x\big[1+t(e^x-1)\big]^{-1}\\
&=\langle x^{k-1}\rangle te^x\sum_{i=0}^{k-1}(-t)^{k-1-i}(e^x-1)^{k-1-i}\\
&=t\sum_{i=0}^{k-1}\big[(-t)^{k-1-i}\cdot\langle x^{k-1}\rangle e^x(e^x-1)^{k-1-i}\big]\\
&=t\sum_{i=0}^{k-1}\big[(-t)^{k-1-i}(k-1-i)!\sum_{0\leq l\leq k-1}\frac{S(l,k-1-i)}{l!\cdot(k-1-l)!}
\big]\qquad\big(\text{by (\ref{identity for Stirling numbers formula1})}\big)\\
&=t\sum_{i=0}^{k-1}\big[\frac{(-t)^{k-1-i}(k-1-i)!}
{(k-1)!}\sum_{0\leq l\leq k-1}{k-1\choose l}S(l,k-1-i)
\big]\\
&=t\sum_{i=0}^{k-1}\big[\frac{(-t)^{k-1-i}(k-1-i)!}
{(k-1)!}S(k,k-i)\big]\qquad\big(\text{by (\ref{identity for Stirling numbers formula2})}\big)\\
&=t\sum_{i=0}^{k-1}\frac{S(k,k-i)}{{k-1\choose i}\cdot i!}(-t)^{k-1-i}.
\end{split}\nonumber\ee
\end{proof}
\end{lemma}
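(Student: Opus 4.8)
The plan is to reduce the contour integral to a coefficient-extraction problem and then massage the resulting sum using the two Stirling identities of Lemma \ref{identity for Stirling numbers lemma}. First I would specialize $Q(x)=e^x$, so that $Q_t(x)=1+t(e^x-1)$ and $Q'_t(x)=te^x$, and invoke the residue theorem to rewrite the left-hand side as the coefficient of $x^{k-1}$ in the formal power series $te^x\big[1+t(e^x-1)\big]^{-1}$. Writing $\langle x^{k-1}\rangle$ for this coefficient functional, the task becomes the purely formal identity asserting that $\langle x^{k-1}\rangle\, te^x\big[1+t(e^x-1)\big]^{-1}$ equals $t\sum_{i=0}^{k-1} S(k,k-i)/\big[\binom{k-1}{i}i!\big]\,(-t)^{k-1-i}$.

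Next I would expand the inverse as a geometric series, $\big[1+t(e^x-1)\big]^{-1}=\sum_{j\ge0}(-t)^j(e^x-1)^j$. Since $(e^x-1)^j$ vanishes to order $x^j$, only the terms with $j\le k-1$ contribute to $\langle x^{k-1}\rangle$; reindexing $j=k-1-i$ truncates the sum to $0\le i\le k-1$, already matching the shape of the target. This reduces everything to evaluating $\langle x^{k-1}\rangle\, e^x(e^x-1)^{m}$ for $m=k-1-i$. Here I would bring in (\ref{identity for Stirling numbers formula1}): writing $(e^x-1)^m=m!\sum_n S(n,m)x^n/n!$ and $e^x=\sum_p x^p/p!$ and convolving gives $\langle x^{k-1}\rangle\, e^x(e^x-1)^m = m!\sum_{l=0}^{k-1} S(l,m)/\big[l!(k-1-l)!\big]$, which I would rewrite as $\big(m!/(k-1)!\big)\sum_{l=0}^{k-1}\binom{k-1}{l}S(l,m)$.

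The crux of the argument --- and the step I expect to be the only nonroutine one --- is recognizing the inner convolution $\sum_{l=0}^{k-1}\binom{k-1}{l}S(l,m)$ as an instance of the recursion (\ref{identity for Stirling numbers formula2}); with $n=k-1$ and $m+1=k-i$ this collapses the entire sum into the single Stirling number $S(k,k-i)$. This is precisely what turns a binomial--Stirling double sum into the desired closed expression, and without it one is left with an unsimplified convolution. Finally, I would observe the elementary factorial rearrangement $m!/(k-1)!=(k-1-i)!/(k-1)! = 1/\big[\binom{k-1}{i}i!\big]$, substitute back, and reassemble the outer sum over $i$ together with the sign $(-t)^{k-1-i}$ and the overall factor $t$ (coming from $Q'_t=te^x$), yielding exactly the right-hand side. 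Every remaining manipulation is bookkeeping on factorials and powers of $t$, so I do not anticipate further difficulty.
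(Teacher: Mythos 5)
Your proposal is correct and follows essentially the same route as the paper's own proof: residue theorem to coefficient extraction, truncated geometric expansion of $\big[1+t(e^x-1)\big]^{-1}$, the exponential generating function (\ref{identity for Stirling numbers formula1}) to evaluate $\langle x^{k-1}\rangle e^x(e^x-1)^m$ as a binomial--Stirling convolution, and the recursion (\ref{identity for Stirling numbers formula2}) to collapse it to $S(k,k-i)$. The final factorial rearrangement also matches the paper exactly.
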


Before ending this section, we collect several general vanishing information on $b^{(k)}_{\lambda}$ in the following lemma.
\begin{lemma}\label{lemmavanishing}
We have $b^{(k)}_{\lambda}=0$ unless $|\lambda|\geq k$, $b^{(k)}_{(k)}=1$, and $b^{(k)}_{\lambda}=0$
whenever $|\lambda|=k$ and $l(\lambda)\geq2$.
\end{lemma}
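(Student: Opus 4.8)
The plan is to bypass the explicit formula (\ref{first main result formula}) entirely and argue directly from the combinatorial reformulation (\ref{coefficient in terms of symmetric function}), namely
$$e_k(e^{\mathbf{x}}-1)=\sum_{\lambda}b^{(k)}_{\lambda}\cdot e_{\lambda}(\mathbf{x}),$$
by comparing homogeneous components of both sides with respect to the grading $\deg(x_i)=1$. The single fact I would isolate at the outset is that each input $e^{x_i}-1=x_i+\tfrac{1}{2}x_i^2+\cdots$ has lowest-degree term $x_i$, of degree exactly $1$.

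First I would prove that $b^{(k)}_{\lambda}=0$ unless $|\lambda|\geq k$. Since $e_k$ is a sum of squarefree degree-$k$ monomials in its arguments and each argument $e^{x_i}-1$ is a power series of order $\geq 1$, the symmetric function $e_k(e^{\mathbf{x}}-1)$ has no terms of homogeneous degree $<k$. On the other hand $e_{\lambda}(\mathbf{x})$ is homogeneous of degree $|\lambda|$, and the sets $\{e_{\lambda}\,|\,|\lambda|=n\}$ are bases of the graded pieces $\Lambda^n(\mathbf{x})$. Hence extracting the degree-$n$ part of both sides for $n<k$ forces $b^{(k)}_{\lambda}=0$ for all $|\lambda|=n<k$. (This is also recorded, in the guise of the $t$-degree of the right-hand side, in Remark \ref{remarkvanishing}.)

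Next I would compute the bottom (degree-$k$) homogeneous part of $e_k(e^{\mathbf{x}}-1)$ in one stroke. As each of the $k$ factors in every squarefree monomial of $e_k$ contributes degree $\geq 1$, a degree-$k$ contribution arises only by selecting the leading term $x_{i}$ from each factor; summing these gives precisely
$$\big[e_k(e^{\mathbf{x}}-1)\big]_{\deg=k}=\sum_{i_1<\cdots<i_k}x_{i_1}\cdots x_{i_k}=e_k(\mathbf{x})=e_{(k)}(\mathbf{x}).$$
Comparing the degree-$k$ parts of both sides of (\ref{coefficient in terms of symmetric function}) in the basis $\{e_{\lambda}\,|\,|\lambda|=k\}$, and noting that $e_{(k)}=e_k$ is itself one of these basis vectors, I read off $b^{(k)}_{(k)}=1$ while $b^{(k)}_{\lambda}=0$ for every other $\lambda$ with $|\lambda|=k$. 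Since $(k)$ is the unique partition of $k$ of length $1$, every remaining such $\lambda$ has $l(\lambda)\geq 2$, which establishes both $b^{(k)}_{(k)}=1$ and the vanishing claim for $|\lambda|=k$, $l(\lambda)\geq 2$ simultaneously.

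I do not anticipate a genuine obstacle here: the argument is purely a matter of tracking homogeneous degrees. The only point requiring a little care is that $e_k(e^{\mathbf{x}}-1)$ is an inhomogeneous formal symmetric function rather than an element of bounded degree, so the expansion (\ref{coefficient in terms of symmetric function}) and all coefficient comparisons should be understood graded piece by graded piece, using that $\{e_{\lambda}\}$ restricts to a basis on each $\Lambda^n(\mathbf{x})$.
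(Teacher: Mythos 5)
Your proof is correct, and its core is the same observation the paper uses: since each argument $e^{x_i}-1$ has order exactly $1$, one has $e_k(e^{\mathbf{x}}-1)=e_k(\mathbf{x})+\text{higher degree terms}$, and comparing graded pieces against the basis $\{e_{\lambda}\}$ of each $\Lambda^n(\mathbf{x})$ yields $b^{(k)}_{(k)}=1$ and the vanishing for $|\lambda|=k$, $l(\lambda)\geq2$. The one place you diverge is the first claim ($b^{(k)}_{\lambda}=0$ for $|\lambda|<k$): the paper derives this from Remark \ref{remarkvanishing}, i.e.\ from the bound on the $t$-degree of the right-hand side of the explicit formula (\ref{first main result formula}), whereas you extract it from the same order-$\geq k$ degree count that gives the other two claims. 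Your route is marginally more economical and self-contained, since it does not invoke Theorem \ref{first main result theorem} at all and handles all three assertions by a single comparison of homogeneous components; the paper's route costs nothing extra given that the remark is already in place. Your closing caveat --- that $e_k(e^{\mathbf{x}}-1)$ is inhomogeneous and the comparison must be done degree by degree --- is the right point to flag and is handled correctly.
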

\begin{proof}
The first one has been explained in Remark \ref{remarkvanishing}. To see the latter two cases it suffices to note that
$$e_k(e^{x_1}-1,e^{x_2}-1,\ldots)
=e_k(x_1,x_2,\ldots)+\text{higher degree terms}.$$
\end{proof}

\section{Proof of Theorem \ref{second main result theorem}}\label{proof of second main result}
Let $(M,\tau)$ be a closed $2n$-dimensional stably almost-complex manifold. Recall that the value of a \emph{complex genus} $\varphi$ which corresponds to a monic formal power series $Q(x)=1+\sum_{i\geq1}a_ix^i$ $(a_i\in\mathbb{Q})$ on $M$, $\varphi(M)$, is a  rational linear combination of Chern numbers $c_{\lambda}[M]$ and defined as follows (\cite[\S 1.8]{HBJ}).
Let 
\be\label{formal decomposition1}\prod_{i\geq1}Q(x_i)=:1+\sum_{j\geq1}Q_j\big(e_1(\mathbf{x}),\cdots,e_j(\mathbf{x})\big)
,\nonumber\ee
where $Q_j\big((e_1(\mathbf{x}),\cdots,e_j(\mathbf{x})\big)$, viewed as a polynomial of $e_1(\mathbf{x}),\cdots,e_j(\mathbf{x})$, denotes the homogeneous part of degree $j$ in $\prod_{i\geq1}Q(x_i)$ \big($\text{deg}(x_i)=1$\big). Then
\be\label{formal decomposition2}\varphi(M)=\int_MQ_n\big(c_1(M),\cdots,c_n(M)\big)
=:h_n(\varphi)c_n[M]+\sum_{|\lambda|= n,~l(\lambda)\geq2}h_{\lambda}(\varphi)c_{\lambda}[M].\ee

The coefficients $h_{\lambda}(\varphi)$ in (\ref{formal decomposition2}) can be expressed in terms of $h_i(\varphi)$ as follows (cf. \cite[Lemma 4.3]{Li}).
\begin{lemma}
Let $\varphi$ be a complex genus which corresponds to the formal power series $Q(x)$.
Then the coefficients $h_{\lambda}(\varphi)$ in (\ref{formal decomposition2}) are given by
\be\label{formula for coefficient}
h_{\lambda}(\varphi)=\frac{(-1)^{l(\lambda)}}{\prod_im_i(\lambda)!}\sum_{\pi\in\Pi_{l(\lambda)}}
\Bigg\{(-1)^{l(\pi)}\prod_{i=1}^{l(\pi)}\Big[\big(|\pi_i|-1\big)!\cdot
h_{\lambda_{\pi_i}}(\varphi)\Big]\Bigg\},\ee
where $h_i(\varphi)$ are determined by
\be\label{id6}
\sum_{i\geq1}(-1)^{i-1}\cdot h_i(\varphi)\cdot x^{i-1}=\frac{Q'(x)}{Q(x)}.
\ee
\end{lemma}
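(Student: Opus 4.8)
The plan is to recognize this lemma as the degree-by-degree, parameter-free counterpart of Theorem \ref{solution to Question theorem}, and to run the same machinery—the auxiliary-variable factorization, Doubilet's transition formula, and Cauchy's identity—directly on the product $\prod_{i\geq1}Q(x_i)$. First I would introduce an auxiliary family of commuting variables $\mathbf{y}=(y_1,y_2,\ldots)$ and declare $a_i=e_i(\mathbf{y})$, which formally factors the characteristic power series as $Q(x)=\prod_{j\geq1}(1+y_jx)$. Then $\prod_{i\geq1}Q(x_i)=\prod_{i,j\geq1}(1+x_iy_j)$, and applying the $m$--$e$ product identity (\ref{formula m-e relation}) yields $\prod_{i\geq1}Q(x_i)=1+\sum_{|\lambda|\geq1}m_{\lambda}(\mathbf{y})e_{\lambda}(\mathbf{x})$.

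Next, the defining relation (\ref{formal decomposition2}) says that the homogeneous degree-$n$ part $Q_n$ expands as $\sum_{|\lambda|=n}h_{\lambda}(\varphi)e_{\lambda}(\mathbf{x})$; summing over $n$ gives $\prod_{i\geq1}Q(x_i)=1+\sum_{|\lambda|\geq1}h_{\lambda}(\varphi)e_{\lambda}(\mathbf{x})$. Since the $\{e_{\lambda}(\mathbf{x})\}$ form an additive basis of the symmetric functions, comparing the two expansions coefficient-wise produces the key identification $h_{\lambda}(\varphi)=m_{\lambda}(\mathbf{y})$.

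From here the formula is a matter of re-expressing $m_{\lambda}(\mathbf{y})$ in the power-sum basis via Doubilet's Theorem \ref{transtion matrice from m to p theorem}, and then identifying the relevant power sums. Applying Cauchy's Lemma \ref{technical lemma} to the data $a_i=e_i(\mathbf{y})$ gives $\sum_{i\geq1}(-1)^{i-1}p_i(\mathbf{y})x^{i-1}=Q'(x)/Q(x)$, which is literally the defining relation (\ref{id6}) for the $h_i(\varphi)$; hence $p_i(\mathbf{y})=h_i(\varphi)$ for every $i$, and in particular $p_{\lambda_{\pi_i}}(\mathbf{y})=h_{\lambda_{\pi_i}}(\varphi)$. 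Substituting these into Doubilet's expansion of $m_{\lambda}(\mathbf{y})$ yields (\ref{formula for coefficient}) verbatim.

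The one point demanding care is the identification $h_{\lambda}(\varphi)=m_{\lambda}(\mathbf{y})$: it rests on treating $a_i=e_i(\mathbf{y})$ as a purely formal substitution (the $y_j$ being bookkeeping symbols, not numbers) and on the linear independence of the $e_{\lambda}(\mathbf{x})$, so that coefficient comparison is legitimate. Beyond this there is no genuine obstacle—every ingredient is already in place, and the argument is a streamlined repetition of the proof of Theorem \ref{solution to Question theorem} with the residue integrals replaced by the quantities $h_i(\varphi)$.
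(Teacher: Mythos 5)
Your proof is correct and follows essentially the same route the paper takes: the paper outsources this lemma to \cite[Lemma 4.3]{Li}, but the argument there (and in the proof of Theorem \ref{solution to Question theorem} in this paper, which you rightly identify as the template) is exactly your chain of formal Chern-root factorization $a_i=e_i(\mathbf{y})$, the identity (\ref{formula m-e relation}), Doubilet's transition formula, and Cauchy's lemma to identify $p_i(\mathbf{y})=h_i(\varphi)$. Your closing remark on why the formal substitution is legitimate (algebraic independence of the $e_i(\mathbf{y})$, linear independence of the $e_{\lambda}(\mathbf{x})$) correctly addresses the only delicate point.
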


Now the complex genus which corresponds to the formal power series $Q(x)=x/\tanh(x)$ is nothing but the signature $\sigma(\cdot)$ due to Hirzebruch's signature theorem (\cite{Hi1}). Therefore
\be\label{id7}\begin{split}
\frac{Q'(x)}{Q(x)}\xlongequal{Q(x)=\frac{x}{\tanh(x)}}&
\frac{1}{x}\big[1-\frac{2x}{\sinh(2x)}\big]\\
=&\sum_{i\geq1}\frac{(-1)^{i-1}\cdot2^{2i+1}\cdot(2^{2i-1}-1)\cdot B_i}{(2i)!}x^{2i-1}.
\qquad\big(\text{by (\ref{Bernoulli})}\big)
\end{split}\ee
Comparing (\ref{id6}) and (\ref{id7}) leads to
$$h_{2i-1}(\sigma)=0,\qquad h_{2i}(\sigma)=\frac{(-1)^{i}\cdot2^{2i+1}\cdot(2^{2i-1}-1)\cdot B_i}{(2i)!},$$
which, together with (\ref{formula for coefficient}), yields (\ref{signature in terms of Chern numbers}) and hence finishes the proof of the first part in Theorem \ref{second main result theorem}.

To prove the second part in Theorem \ref{second main result theorem}. We need some more notation and facts.

The \emph{$2$-adic valuation} of an integer $k$ is defined to be
\begin{eqnarray}\label{2-adic}
\nu_2(k):=
\left\{ \begin{array}{ll}
\displaystyle
\max\{t\in\mathbb{Z}_{\geq0}:~\text{$2^t$ divides $k$}\}&\text{if $k\neq0$}\\

\displaystyle
\infty&\text{if $k=0$}.
\end{array} \right.\nonumber
\end{eqnarray}
Clearly $\nu_2(k)=0$ if and only if $k$ is odd, $\nu_2(\cdot)$ satisfies $\nu_2(k_1k_2)=\nu_2(k_1)+\nu_2(k_2)$ and
\be\label{2-adic difference}\nu_2(k_1/k_2):=\nu_2(k_1)-\nu_2(k_2)\qquad (k_2\neq0)\nonumber\ee
is well-defined.

\begin{lemma}\label{2-adic of h}


Let $\lambda$ be an integer partition such that $m_i(\lambda)=0$ or $1$ for \emph{all} $i$, i.e., all the parts $\lambda_i$ of this $\lambda$ are mutually distinct, then
the $2$-adic valuation of the rational number $h_{\lambda}$ given in (\ref{signature in terms of Chern numbers}) satisfies $\nu_2(h_{\lambda})\geq1$.
\end{lemma}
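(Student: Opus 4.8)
The plan is to exploit the fact that the distinctness hypothesis makes the denominator in (\ref{signature in terms of Chern numbers}) trivial, and then to reduce everything to a single $2$-adic estimate on the ``one-part'' coefficients $h_{2i}$. First I would observe that when all parts of $\lambda$ are mutually distinct one has $m_i(\lambda)\in\{0,1\}$ for every $i$, so $\prod_{i\ge1}m_i(\lambda)!=1$; in particular the prefactor $\frac{1}{\prod_i m_i(\lambda)!}$ in (\ref{signature in terms of Chern numbers}) is a $2$-adic unit (indeed equal to $1$), and $h_\lambda$ is literally the integer-coefficient sum $(-1)^{l(\lambda)}\sum_{\pi\in\Pi_{l(\lambda)}}(-1)^{l(\pi)}\prod_{i=1}^{l(\pi)}\big[(|\pi_i|-1)!\,h_{\lambda_{\pi_i}}\big]$ over set partitions. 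Since every factorial $(|\pi_i|-1)!$ is an integer, hence $2$-adically integral, the whole problem is thereby reduced to controlling $\nu_2$ of the numbers $h_{\lambda_{\pi_i}}$.

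The heart of the argument is the claim that $\nu_2(h_{2i})\ge1$ for every $i\ge1$ (while $h_{2i-1}=0$). To prove it I would start from the closed form $h_{2i}=\frac{(-1)^i 2^{2i+1}(2^{2i-1}-1)B_i}{(2i)!}$ in (\ref{expression for h}) and compute $\nu_2$ of each factor. The powers of $2$ contribute $\nu_2(2^{2i+1})=2i+1$ and $\nu_2(2^{2i-1}-1)=0$. For the factorial, Legendre's formula gives $\nu_2\big((2i)!\big)=2i-s_2(2i)$, where $s_2(\cdot)$ denotes the number of $1$'s in the binary expansion; since $s_2(2i)\ge1$ this is at most $2i-1$. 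The remaining and decisive input is the denominator of $B_i$: comparing the defining expansion (\ref{Bernoulli}) with the classical $\frac{x}{\sinh x}=\sum_{n\ge0}\frac{(2-2^{2n})\mathcal{B}_{2n}}{(2n)!}x^{2n}$ identifies $B_i$ with the absolute value $|\mathcal{B}_{2i}|$ of the ordinary (signed) Bernoulli number. By the von Staudt--Clausen theorem the denominator of $\mathcal{B}_{2i}$ is the squarefree product of those primes $p$ with $(p-1)\mid 2i$, so $2$ occurs to the first power only; hence $\nu_2(B_i)=-1$. Assembling these, $\nu_2(h_{2i})=(2i+1)+0+(-1)-(2i-s_2(2i))=s_2(2i)=s_2(i)\ge1$, which proves the claim (and in fact gives the exact valuation).

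With the claim in hand I would finish by a nonarchimedean estimate on the sum. For a fixed set partition $\pi\in\Pi_{l(\lambda)}$ consider the summand $T_\pi:=(-1)^{l(\pi)}\prod_{i=1}^{l(\pi)}\big[(|\pi_i|-1)!\,h_{\lambda_{\pi_i}}\big]$. If some block-sum $\lambda_{\pi_i}$ is odd then $h_{\lambda_{\pi_i}}=0$ and $T_\pi=0$; otherwise every $\lambda_{\pi_i}$ is even and $\nu_2(T_\pi)\ge\sum_{i=1}^{l(\pi)}\nu_2(h_{\lambda_{\pi_i}})\ge l(\pi)\ge1$, using that the factorials are $2$-adically integral together with the claim. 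In either case $\nu_2(T_\pi)\ge1$, so by the ultrametric inequality $\nu_2\big(\sum_\pi T_\pi\big)\ge\min_\pi\nu_2(T_\pi)\ge1$, and since the prefactor is a unit we conclude $\nu_2(h_\lambda)\ge1$.

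The step I expect to be the main obstacle is pinning down $\nu_2(B_i)=-1$ cleanly: everything else is bookkeeping with $\nu_2$, but this requires invoking the precise arithmetic of Bernoulli denominators (von Staudt--Clausen), together with the identification $B_i=|\mathcal{B}_{2i}|$ forced by the sign convention in (\ref{Bernoulli}). Once that single valuation is secured, the distinctness hypothesis does exactly the work of keeping the denominator odd, and the remaining estimate is immediate from the nonarchimedean property of $\nu_2$.
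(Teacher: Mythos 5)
Your proposal is correct and follows essentially the same route as the paper: the distinctness hypothesis makes the prefactor a $2$-adic unit, the key estimate $\nu_2(h_{2i})=wt(i)\geq 1$ is obtained exactly as in the paper from von Staudt--Clausen ($\nu_2$ of the Bernoulli denominator equals $1$) together with Legendre's formula $\nu_2\big((2i)!\big)=2i-wt(i)$, and the conclusion follows by the ultrametric inequality applied to the sum over set partitions. Your write-up is if anything slightly more explicit than the paper's about the final nonarchimedean estimate on each summand $T_\pi$.
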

\begin{proof}
Write the Bernoulli numbers $B_i$ as $B_i=:N_i/D_i$ ($N_i,D_i\in\mathbb{Z}_{>0}$) in the irreducible form. Since $D_i$ is equal to the product of all primes $p$ such that $p-1$ divides $2i$ (\cite[p. 284]{MS}), we have that $\nu_2(D_i)=1$ and hence $\nu_2(N_i)=0$.

Let $wt(i)$ denote the number of nonzero terms in the binary expansion of $i$. Then we have $\nu_2(i!)+wt(i)=i$, which is indeed a special case of the Legendre's formula (\cite[Thm 2.6.4]{Mo}, also cf. \cite[Lemma 2.2]{FS}). This implies that
\be\nu_2\big((2i)!\big)=2i-wt(2i)=2i-wt(i),\nonumber\ee
and thus \be\label{2-adic h2i}\nu_2(h_{2i})\xlongequal[]{(\ref{expression for h})}
(2i+1)+\nu_2(N_i)-\nu_2\big((2i)!\big)-\nu_2(D_i)=wt(i)\geq1.\ee

The condition of $m_i(\lambda)=0$ or $1$ for all $i$ is equivalent to $$\nu_2\big(\prod_{i\geq1}m_i(\lambda)!\big)=0,$$
which, together with (\ref{2-adic h2i}), yields that for such $\lambda$ the coefficient $h_{\lambda}$ given by (\ref{signature in terms of Chern numbers}) satisfies $\nu_2(h_{\lambda})\geq1$.
\end{proof}

Now we can finish the proof of the second part in Theorem \ref{second main result theorem}.
\begin{proof}
If all nonzero Chern numbers $c_{\lambda}[M]$ are such that the integer partitions $\lambda$ satisfy $m_i(\lambda)=0$ or $1$ for all $i$, as required by the assumption (\ref{second main result formula}), every nonzero summand $h_{\lambda}c_{\lambda}[M]$ in the signature formula $$\sigma(M)=\sum_{|\lambda|=2k}h_{\lambda}c_{\lambda}[M]$$
satisfies $\nu_2(h_{\lambda}c_{\lambda}[M])\geq1$ due to Lemma \ref{2-adic of h}. This implies that $\nu_2\big(\sigma(M)\big)\geq1$, i.e., $\sigma(M)$ is even.
\end{proof}

\section{Proof of Theorem \ref{third main result theorem}}\label{proof of third main result}
The following congruence property will be used in proving Theorem \ref{third main result theorem} for the case of $i=k=2$, which may be of interest on its own.
\begin{lemma}\label{signaturetopchernnumber}
Let $M$ be a $4k$-dimensional stably almost-complex manifold.
The signature $\sigma(M)$ and the top Chern number $c_{2k}[M]$ satisfy
\be\label{congruence}\sigma(M)\equiv(-1)^kc_{2k}[M]~(\text{mod $4$}).\ee
\end{lemma}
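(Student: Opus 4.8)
The plan is to deduce the congruence from the Hirzebruch $\chi_y$-genus (the $T_y$-genus), which simultaneously packages the signature and the top Chern number. Recall that $\chi_y$ is the complex genus attached to the universal power series $Q_y(x)=\frac{x(1+y\,e^{-x(1+y)})}{1-e^{-x(1+y)}}$; being defined through a power series in the sense of (\ref{formal decomposition2}), it makes sense on any stably almost-complex manifold and produces, in complex dimension $n=2k$, a polynomial $\chi_y(M)=\sum_{p=0}^{2k}a_p(M)\,y^{p}$ whose coefficients $a_p(M)$ are $\mathbb{Q}$-linear combinations of Chern numbers of $M$. I would first record three structural facts: the integrality $a_p(M)\in\mathbb{Z}$; the two specializations $\chi_1(M)=\sigma(M)$ and $\chi_{-1}(M)=c_{2k}[M]$; and the functional-equation symmetry $a_p(M)=a_{2k-p}(M)$.

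Granting these, the computation is short. I would write $\sigma(M)-(-1)^{k}c_{2k}[M]=\chi_1(M)-(-1)^{k}\chi_{-1}(M)=\sum_{p=0}^{2k}a_p(M)\big(1-(-1)^{k+p}\big)=2\sum_{p\not\equiv k\,(\mathrm{mod}\,2)}a_p(M)$, since the bracket vanishes when $k+p$ is even and equals $2$ when $k+p$ is odd. It then remains to see that $\sum_{p\not\equiv k}a_p(M)$ is itself even, and for this I would invoke the involution $p\mapsto 2k-p$ on the index set $S=\{p:p\not\equiv k\pmod 2\}$: since $2k-p\equiv p\pmod 2$ this involution preserves $S$, its only fixed point $p=k$ does not lie in $S$, and $a_p(M)=a_{2k-p}(M)$. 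Hence $S$ breaks into pairs on which $a_p$ is constant, so the sum is twice an integer. Combining the two displays yields $\nu_2\big(\sigma(M)-(-1)^{k}c_{2k}[M]\big)\geq 2$, which is exactly (\ref{congruence}).

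The real work — and the part I expect to be the main obstacle — is justifying the three structural facts for merely \emph{stably} almost-complex manifolds rather than honest complex manifolds, where they are classical (Serre duality for the symmetry, Hodge theory for $\chi_{\pm1}$, Riemann--Roch for integrality). The crucial input is the integrality $a_p(M)\in\mathbb{Z}$, which I would reduce to the Hattori--Stong conditions (\ref{integrality}): each $a_p(M)$ is, up to sign, a characteristic number of the form $\int_M ch(\wedge^{p}\widetilde{\tau})\,td(M)$, and $\wedge^{p}(\widetilde{\tau})$ expands as an \emph{integral} combination of the operations $\gamma^{j}(\widetilde{\tau})$ via (\ref{wedge operation formula})--(\ref{gamma operation formula}), so $a_p(M)$ is one of the integral quantities guaranteed by (\ref{integrality}). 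The specializations follow purely at the level of characteristic power series: substituting $y=1$ gives $Q_1(x)=x/\tanh x$, the signature series, so $\chi_1=\sigma$; substituting $y=-1$ gives $Q_{-1}(x)=1+x$, whose degree-$2k$ term is $e_{2k}(\mathbf{x})$, so $\chi_{-1}(M)=c_{2k}[M]$. Finally the symmetry is the formal identity $\chi_y(M)=(-1)^{n}y^{n}\chi_{1/y}(M)$ satisfied universally by $Q_y$ (\cite[\S15]{Hi1},\cite{HBJ}); being an identity among the Chern-number combinations $a_p$, it holds for every stably almost-complex $M$, and with $n=2k$ even it gives $a_p=a_{2k-p}$.

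As a consistency check I would test the formula on $\mathbb{C}P^{2k}$ and on products such as $\mathbb{C}P^{2}\times\mathbb{C}P^{2}$, where $\sigma=1$ and $c_{2k}[M]$ is odd, so that $\sigma-(-1)^{k}c_{2k}[M]$ is visibly a multiple of $4$; the case $k=1$ specializes to $\tfrac14(\sigma+c_2)=Td$, i.e.\ Noether's formula, which is the familiar low-dimensional shadow of this congruence. Should the clean $\chi_y$-route be deemed to rely too heavily on external machinery, a fallback is a direct $2$-adic analysis of the coefficients $h_\lambda$ in (\ref{signature in terms of Chern numbers}) modulo $4$ in the spirit of Lemma \ref{2-adic of h}, but I expect the bookkeeping there to be substantially heavier.
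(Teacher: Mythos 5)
Your proposal is correct, and the underlying mechanism (the mod-$4$ symmetry of the $\chi_y$-genus) is the same one that powers the paper's proof; but you package it quite differently. The paper first quotes the congruence $\sigma(N)\equiv(-1)^k\chi(N)\pmod 4$ for honest almost-complex manifolds $N$ (citing Hirzebruch and the detailed proof in \cite{RY}, which is exactly your $a_p=a_{2k-p}$ pairing argument), and then transfers it to the stably almost-complex case via Kahn's theorem \cite[Cor.~5]{Kah} that every $4k$-dimensional stably almost-complex manifold is complex cobordant to an almost-complex one, together with cobordism invariance of Chern numbers and of $\sigma$. You instead run the $\chi_y$-argument directly on the stably almost-complex manifold, which requires you to supply the three structural facts yourself; this buys independence from Kahn's realization theorem and lets you read off $\chi_{-1}(M)=c_{2k}[M]$ formally from $Q_{-1}(x)=1+x$ rather than via the Euler characteristic of a cobordant representative, at the cost of having to justify integrality and the functional equation in the stable setting. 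Both of those are fine: the symmetry $\chi_y=(-1)^ny^n\chi_{1/y}$ is an identity of genera (both sides are multiplicative and agree on all $\mathbb{C}P^m$), hence an identity among Chern-number combinations valid for any stably almost-complex $M$; and the integrality does reduce to (\ref{integrality}). One small imprecision: $a_p(M)$ is not literally $\pm\int_M ch(\wedge^p\widetilde{\tau})\,td(M)$ --- already for an honest complex manifold $\chi^1=\int_M ch(\widetilde{T^*M})\,td(M)+n\,Td(M)$, so the correct statement is that each $a_p(M)$ is an \emph{integral linear combination} of the quantities $\int_M ch(\wedge^j\widetilde{\tau}^{\,*})\,td(M)$ (and hence of the quantities in (\ref{integrality})), which still gives $a_p(M)\in\mathbb{Z}$; alternatively, integrality follows at once from the facts that $\chi_y$ is a ring homomorphism $\Omega^U_*\to\mathbb{Q}[y]$, that $\Omega^U_*$ is generated by honest complex manifolds (Milnor), and that $\chi_y$ of a complex manifold lies in $\mathbb{Z}[y]$. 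With that repair your argument is complete and arguably more self-contained than the paper's, since Hattori--Stong integrality is already the central tool of the article.
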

\begin{proof}
First we have, for any $4k$-dimensional almost-complex manifold $N$,
\be\label{congruence2}\sigma(N)\equiv(-1)^k\chi(N)~(\text{mod $4$}),\ee
where $\chi(N)$ is the Euler characteristic of $N$.
This fact can be proved by using various symmetric properties of the Hirzebruch $\chi_y$-genus and appears in \cite[p.777]{Hi2}. A detailed proof can be found in \cite[(3.5),(3.7)]{RY}.
Secondly, any $4k$-dimensional stably almost-complex $M$ is complex cobordant to a $4k$-dimensional almost-complex manifold $N$ (\cite[Cor.5]{Kah}). Since Chern numbers (and hence signature) are complex cobordant invariants, we have $\sigma(M)=\sigma(N)$ and $c_{2k}[M]=c_{2k}[N]=\chi(N)$. Substituting them into (\ref{congruence2}) yields (\ref{congruence}).
\end{proof}

We now proceed to show Theorem \ref{third main result theorem} and therefore assume that the possibly nonzero Chern numbers of the $4k$-dimensional stably almost-complex manifold $(M,\tau)$ are $c_{2k}[M]$ and $c_ic_{2k-i}[M]$ for some $1\leq i\leq k$.

If $1\leq i\leq k-1$, then $\sigma(M)$ is even due to Theorem \ref{second main result theorem} as the assumption condition (\ref{second main result formula}) is satisfied.

If $i=k$, the possibly nonzero Chern numbers are $c_{2k}[M]$ and $c_{k}^2[M]$ and so the relevant Chern classes we need to deal with are $c_{2k}(M)$ and $c_{k}(M)$. In this situation 
\be\label{id9}
ch\big(\gamma^1(\widetilde{E})\big)\xlongequal[]{(\ref{coefficient in terms of Chern character})}b^{(1)}_{(k)}c_k(M)+
b^{(1)}_{(k,k)}c^2_k(M)+b^{(1)}_{(2k)}c_{2k}(M)+\text{other terms}\ee
and thus by Theorem \ref{AHHS} we have
\be\mathbb{Z}\ni\int_M\Big[ch\big(\gamma^1(\widetilde{E})\big)
ch\big(\gamma^1(\widetilde{E})\big)td(M)\Big]\xlongequal{(\ref{id9})}
\big(b^{(1)}_{(k)}\big)^2c_k^2[M]\xlongequal{(\ref{special values of b})}\frac{c_k^2[M]}{\big[(k-1)!\big]^2}.
\nonumber\ee
This implies that when $i=k\geq3$, the Chern number $c_k^2[M]$ is divisible by $4$ and therefore
\be\sigma(M^{4k})=h_{(2k)}c_{2k}[M]+h_{(k,k)}c^2_k[M]
\xlongequal{(\ref{signature in terms of Chern numbers})}
h_{2k}\cdot c_{2k}[M]+\frac{h^2_k-h_{2k}}{2}c^2_k[M]\nonumber\ee
is also even due to the facts that $h_{2i-1}=0$ and $\nu_2(h_{2i})\geq1$ from (\ref{2-adic h2i}).

It suffices to show the case $i=k=2$, i.e., the case of $8$-dimensional stably almost-complex manifold $M^8$ with possibly nonzero Chern numbers $c_4:=c_4[M]$ and $c^2_2:=c^2_2[M]$. In this dimension
\be\label{id8}
\sigma(M^8)=\frac{1}{45}(14c_4+3c_2^2),\qquad
\int_Mtd(M)=\frac{1}{720}(-c_4+3c_2^2)
\ee
and so $45\sigma(M^8)=14c_4+3c_2^2$, which, together with (\ref{congruence}), yields
\be\label{id10}
c_4\equiv c^2_2~(\text{mod $4$})
\ee
If $\sigma(M^8)$ is odd, then $c_4=2\alpha+1$ ($\alpha\in\mathbb{Z}$), still due to (\ref{congruence}). Then by the integrality of the Todd number $\int_Mtd(M)$ in (\ref{id8}) we have
\be\begin{split}
\mathbb{Z}\ni\int_Mtd(M)=\frac{1}{720}(-c_4+3c_2^2)&=
\frac{-(2\alpha+1)+3(2\alpha+1+4\beta)}{720}\qquad
\big(\text{by (\ref{id10}), $\beta\in\mathbb{Z}$}\big)\\
&=\frac{4\alpha+12\beta+2}{720},\end{split}\ee
which is a contradiction. Thus $\sigma(M^8)$ is even and completes the proof of Theorem \ref{third main result theorem}.

\section{Remarks on integral/rational projective planes}\label{remarks on RPP}
An $n$-dimensional smooth closed connected orientable manifold $M$ whose integral cohomology groups (resp. Betti numbers) satisfy
$H^i(M;\mathbb{Z})=\mathbb{Z}$ for $i=0,n/2, n$ and $H^i(M;\mathbb{Z})=0$ for other $i$ (resp. $b_0=b_{\frac{n}{2}}=b_{n}=1$ and $b_i=0$ for other $i$) is called an \emph{integral projective plane} (resp. a \emph{rational projective plane}) (IPP or RPP for short). As mentioned in Section \ref{main results subsection}, such (classical) examples exist when $n=4$, $8$ or $16$. The study of the existence/nonexistence of higher-dimensional IPP or RPP was initiated by Adem and Hirzebruch (\cite{Ade},\cite{Hi53},\cite{Hi54-1},\cite{Hi54-2}). Now it turns out that an IPP can \emph{only} exist when $n=4$, $8$ or $16$ (\cite[p.766]{Hi1.5}), which is a consequence of Adams' solution to the non-existence of Hopf invariant one (\cite{Ada}). Hirzebruch showed that (\cite[\S 3, Thm 1]{Hi54-1}) an $n$-($n>8$)dimensional RPP exists only if $n$ is of the form
$n=8(2^a+2^b)$ with $a,b\in\mathbb{Z}_{\geq0}$, i.e., $n$ is divisible by $8$ and $wt(n)\leq2$ (recall the notation in the proof of Lemma \ref{2-adic of h}). Moreover, a \emph{spin} RPP can only exist when $n=8$ or $16$ (\cite[p.786]{Hi1.5}, \cite[Thm C]{KS}).

Recently, the existence of RPP was investigated systematically by Su and her coauthors in a series of papers (\cite{Su1}, \cite{Su2}, \cite{FS}, \cite{KS}), whose main tool is the Barge-Sullivan rational surgery realization theorem (\cite{Ba}, \cite{Sul}, \cite{Mil}) as well as the Hattori-Stong theorems.
Among other things, they showed that an RPP exists in dimension $n\leq512$ if and only if $n\in\{4,8,16,32,128,256\}$ (\cite[Thm 1.1]{Su1}, \cite[Thm A]{KS}).
Moreover, whether a given dimension $n=8(2^a+2^b)$ supports
an RPP is reduced to a solvability of a quadratic residue equation (\cite[Thms 6,9]{KS}).

Recall that the Betti numbers $b_i=b_i(M)$ of a \emph{general} $n$-dimensional (closed connected orientable) manifold $M$ satisfy
\be\label{Betti restrictions} b_0=b_n=1,\qquad b_i=b_{n-i},\qquad \text{$b_{n/2}$ is even whenever $n=4k-2$}.\ee
The first two restrictions in (\ref{Betti restrictions}) are clear and the third one is due to the fact that in this case the intersection pairing on $H^{2k-1}(M;\mathbb{R})$ is both skew-symmetric and non-degenerate. Conversely, given a sequence of nonnegative integers $(b_0,b_1,\ldots,b_n)$ satisfying (\ref{Betti restrictions}), can we find an $n$-dimensional manifold whose Betti numbers realize this sequence? The following proposition tells us that such realization problem is deeply related to the existence of an RPP when $n=4k$.
\begin{proposition}
Let $(b_0,b_1,\ldots,b_n)$ be a sequence of nonnegative integers satisfying the restrictions in (\ref{Betti restrictions}).
\begin{enumerate}
\item
When $n$ is odd, $n=4k-2$, or $n=4k$ and $b_{2k}$ is even, there exists an $n$-dimensional manifold whose Betti numbers realize this sequence.

\item
When $n=4k$ and $b_{2k}$ is odd, there exists an $n$-dimensional manifold whose Betti numbers realize this sequence if there exists a $4k$-dimensional RPP.
\end{enumerate}
\end{proposition}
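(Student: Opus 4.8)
The plan is to realize the prescribed sequence by taking connected sums of products of spheres, bringing in a $4k$-dimensional RPP as an extra building block exactly when an odd middle Betti number must be produced. I would first record, via the K\"{u}nneth formula, the Betti numbers of the elementary pieces: the sphere $S^n$ contributes only $b_0 = b_n = 1$; for $0 < i < n/2$ the product $S^i \times S^{n-i}$ contributes $1$ to each of $b_i$ and $b_{n-i}$ (consistent with the constraint $b_i = b_{n-i}$ in (\ref{Betti restrictions})); when $n$ is even the middle product $S^{n/2}\times S^{n/2}$ has middle Betti number $2$; and a $4k$-dimensional RPP $P$ contributes only $b_0 = b_{2k} = b_{4k} = 1$.

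Next I would invoke the additivity of Betti numbers under connected sum: for closed connected orientable $n$-manifolds one has $b_i(M_1 \# M_2) = b_i(M_1) + b_i(M_2)$ for $0 < i < n$, while $b_0 = b_n = 1$ is preserved, and a connected sum of orientable manifolds stays orientable and connected, so every manifold produced below lies in the required category. The assembly is then a case analysis. In each case I start from $S^n$ and form the connected sum with $b_i$ copies of $S^i \times S^{n-i}$ for every $0 < i < n/2$, installing all the off-middle Betti numbers. It remains only to fix $b_{n/2}$. If $n$ is odd there is no middle dimension and we are done; if $n = 4k-2$, the hypothesis makes $b_{2k-1} = 2m$ even, and $m$ copies of $S^{2k-1}\times S^{2k-1}$ supply it (each factor carrying a skew-symmetric nondegenerate middle pairing of rank two, matching the symplectic constraint that forces evenness); if $n = 4k$ and $b_{2k} = 2m$ is even, then $m$ copies of $S^{2k}\times S^{2k}$ supply it. This settles part (1).

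For part (2), where $n = 4k$ and $b_{2k}$ is odd, the products of spheres contribute only \emph{even} amounts to the middle Betti number, so they cannot by themselves produce an odd $b_{2k}$. I would therefore use the hypothesized $4k$-dimensional RPP $P$, which has middle Betti number exactly $1$: forming the connected sum of $P$, of $(b_{2k}-1)/2$ copies of $S^{2k}\times S^{2k}$, and of the off-middle products as above, yields a manifold whose middle Betti number is $1 + (b_{2k}-1) = b_{2k}$, odd as desired. This is exactly where the RPP hypothesis enters, and it pinpoints the only real obstacle: every $S^p\times S^p$ contributes an even rank to $H^p$, so an odd middle Betti number in dimension $4k$ can be manufactured only once one is handed a manifold whose middle Betti number is odd, and a $4k$-dimensional RPP is the minimal such supply.
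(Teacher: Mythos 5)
Your proposal is correct and follows essentially the same route as the paper: connected sums of the building blocks $S^i\times S^{n-i}$ to install the off-middle Betti numbers, copies of $S^{n/2}\times S^{n/2}$ to supply an even middle Betti number, and a $4k$-dimensional RPP to account for the odd unit in the middle dimension in case (2). The only difference is that you make explicit the K\"unneth computation and the additivity of Betti numbers under connected sum, which the paper leaves implicit.
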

\begin{remark}
\begin{enumerate}
\item
Proposition \ref{Betti restrictions} is known to some experts. For instance, we were told by Yang Su that M. Kreck knows it many years ago. But it seems that it was never written down in the literature, to our best knowledge. So we present it here for the reader's convenience.

\item
As discussed above, in some dimensions RPP are known to exist, so in these dimensions the answer to the Betti number realization problem are also affirmative. For some dimension $4k$, if an RPP does not exist in this dimension, we do not know how to construct a desired manifold realizing a general prescribed Betti numbers.
\end{enumerate}
\end{remark}
\begin{proof}
The basic idea is to apply connected sum operations to the building blocks $S^i\times S^{n-i}$ ($1\leq i\leq\lfloor n/2\rfloor$).

When $n$ is odd, we may take connected sums of $b_i$ copies of $S^i\times S^{n-i}$ for all $1\leq i\leq\frac{n-1}{2}$.

When $n=4k-2$ (resp. $n=4k$ and $b_{2k}$ is even), we may take connected sums of $b_i$ copies of $S^i\times S^{n-i}$ for all $1\leq i\leq 2k-2$ (resp. $1\leq i\leq 2k-1$) and $b_{2k-1}/2$ (resp. $b_{2k}/2$) copies of $S^{2k-1}\times S^{2k-1}$ (resp. $S^{2k}\times S^{2k}$).

When $n=4k$ and $b_{2k}$ is odd. We may take connected sums of $b_i$ copies of $S^i\times S^{n-i}$ for all $1\leq i\leq b_{2k-1}$, $\frac{b_{2k}-1}{2}$ copies of $S^{2k}\times S^{2k}$, and an RPP.
\end{proof}

\section*{Acknowledgements}
The authors thank the referee for some useful comments, which make this paper more readable.


\begin{thebibliography}{9999999}
\bibitem[Ada60]{Ada}
{J.F. Adams}:
\newblock {\em On the non-existence of elements of Hopf invariant one},
\newblock Ann. of Math.(2) {\bf 72} (1960), 20-104.



\bibitem[Ade52]{Ade}
{J. Adem}:
\newblock {\em The iteration of the Steenrod squares in algebraic topology},
\newblock Proc. Nat. Acad. Sci. {\bf 38} (1952), 720-726.

\bibitem[At67]{At}
{M. Atiyah}:
\newblock {\em $K$-theory},
\newblock Benjamin, New York, 1967.

\bibitem[AH59]{AH1}
{M. Atiyah, F. Hirzebruch}:
\newblock {\em Riemann-Roch theorems for differentiable manifolds},
\newblock Bull. Amer. Math. Soc. {\bf 65} (1959), 276-281.

\bibitem[AH61]{AH2}
{M. Atiyah, F. Hirzebruch}:
\newblock {\em Cohomologie-Operationen und charakteristische Klassen},
\newblock Math. Z. {\bf 77} (1961), 149-187.

\bibitem[AS68]{AS}
{M.F. Atiyah, I.M. Singer}:
\newblock {\em The index theory of elliptic operators: III},
\newblock Ann. Math. {\bf 87} (1968), 546-604.

\bibitem[Ba76]{Ba}
{J. Barge}:
\newblock {\em Structures diff\'{e}rentiables sur les types d'homotopie rationnelle simplement connexes},
\newblock Ann. Sci. \'{E}cole Norm. Sup. (4)  {\bf 9} (1976), 469-501.

\bibitem[BB18]{BB}
{A. Berglund, J. Bergstr\"{o}m}:
\newblock {\em Hirzebruch L-polynomials and multiple zeta values},
\newblock Math. Ann. {\bf 372} (2018), 125-137.


\bibitem[BS53]{BS}
{A. Borel, J.-P. Serre}:
\newblock {\em Groupes de Lie et puissances r\'{e}duites de Steenrod},
\newblock Amer. J. Math. {\bf 75} (1953), 409-448.



\bibitem[Do72]{Do72}
{P. Doubilet}:
\newblock {\em On the foundations of combinatorial theory. VII: Symmetric functions through the theory of distribution and occupancy},
\newblock Studies in Appl. Math. {\bf 51} (1972), 377-396.

\bibitem[FS16]{FS}
{J. Fowler, Z. Su}:
\newblock {\em Smooth manifolds with presrcibed rational cohomology ring},
\newblock Geom. Dedicata {\bf 182} (2016), 215-232.

\bibitem[Ha66]{Ha}
{A. Hattori}:
\newblock {\em Integral characteristic numbers for weakly almost complex manifolds},
\newblock Topology {\bf 5} (1966), 259-280.

\bibitem[Hir53]{Hi53}
{F. Hirzebruch}:
\newblock {\em The index of an oriented manifold and the Todd genus of an almost complex manifold},
\newblock 1953 manuscript published in \cite[p.60-80]{Hi1.5}.

\bibitem[Hir54-1]{Hi54-1}
{F. Hirzebruch}:
\newblock {\em On the characteristic cohomology classes of differentiable manifolds},
\newblock 1954 manuscript published in \cite[p.145-148]{Hi1.5}.

\bibitem[Hi54-2]{Hi54-2}
{F. Hirzebruch}:
\newblock {\em Some problems on differentiable and complex manifolds},
\newblock Ann of Math.(2) {\bf 60} (1954), 213-236.

\bibitem[Hir66]{Hi1}
{F. Hirzebruch}:
\newblock {\em Topological methods in algebraic geometry},
\newblock 3rd Edition, Springer, Berlin (1966).

\bibitem[Hir87-1]{Hi1.5}
{F. Hirzebruch}:
\newblock {\em Gesammelte Abhandlungen/Collected papers. I. 1951-1962},
\newblock Reprint of the 1987 edition. Springer Collected Works in Mathematics. Springer, Heidelberg, 2013. vi+814 pp.

\bibitem[Hir87-2]{Hi2}
{F. Hirzebruch}:
\newblock {\em Gesammelte Abhandlungen/Collected papers. II. 1963-1987},
\newblock Reprint of the 1987 edition. Springer Collected Works in Mathematics. Springer, Heidelberg, 2013. vi+818 pp.

\bibitem[HBJ92]{HBJ}
{F. Hirzebruch, T. Berger, R. Jung}:
\newblock {\em Manifolds and modular forms},
\newblock Aspects of Mathematics, E20, Friedr. Vieweg and Sohn, Braunschweig, 1992.

\bibitem[Hu24]{Hu}
{J. Hu}:
\newblock {\em Almost complex manifolds with total Betti number three},
\newblock J. Topol. Anal. {\bf 16} (2024), 945-953.

\bibitem[Kah69]{Kah}
{P.J. Kahn}:
\newblock {\em Obstructions to extending almost $X$-structures},
\newblock Illinois J. Math. {\bf 13} (1969), 336-397.

\bibitem[Kar78]{Kar}
{M. Karoubi}:
\newblock {\em K-theory: An Introduction},
\newblock Vol. 226 of Grundlehren der mathematischen Wissenschaften. Berlin: Springer, 1978.


\bibitem[KS19]{KS}
{L. Kennard, Z. Su}:
\newblock {\em On dimensions supporting a rational projective plane},
\newblock J. Topol. Anal. {\bf 11} (2019), 535-555.

\bibitem[Li24]{Li}
{P. Li}:
\newblock {\em The complex genera, symmetric functions and multiple zeta values},
\newblock  J. Combin. Theory Ser. A {\bf 206} (2024), Paper No. 105893, 23 pp.

\bibitem[Mac95]{Ma95}
{I.G. Macdonald}:
\newblock {\em Symmetric Functions and Hall polynomials},
\newblock Oxford Math. Monographs. Clarendon Press, Oxford (1995)

\bibitem[Mi22]{Mil}
{A. Milivojevi\'{c}}:
\newblock {\em On the characterization of rational homotopy types and Chern classes of closed almost complex manifolds},
\newblock Complex Manifolds {\bf 9} (2022), 138-169.

\bibitem[Mi60]{Mi}
{J. Milnor}:
\newblock {\em On the cobordism ring $\Omega^{\ast}$ and a complex analogue. I.},
\newblock  Amer. J. Math. {\bf 82} (1960), 505-521.

\bibitem[MS74]{MS}
{J.W. Milnor, J.D. Stasheff}:
\newblock {\em Characteristic classes},
\newblock Princeton University Press, Princeton, N. J., 1974. Annals of Mathematics Studies, No. 76.

\bibitem[Mo12]{Mo}
{V.H. Moll}:
\newblock {\em Numbers and functions: from a classical-experimental mathematician's point of view},
\newblock Vol. 65. American Mathematical Soc., (2012).

\bibitem[RY16]{RY}
{C. Rovi, S. Yokura}:
\newblock {\em Hirzebruch $\chi_y$-genera modulo $8$ of fiber bundles for odd integers $y$},
\newblock Pure Appl. Math. Q. {\bf 12} (2016), 587-602.

\bibitem[St97]{Sta97}
{R. P. Stanley}:
\newblock {\em Enumerative combinatorics. Vol. $1$},
\newblock  Cambridge Studies in Advanced Mathematics, 49. Cambridge University Press, Cambridge, 1997.

\bibitem[St99]{Sta99}
{R. P. Stanley}:
\newblock {\em Enumerative combinatorics. Vol. $2$},
\newblock  Cambridge Studies in Advanced Mathematics, 62. Cambridge University Press, Cambridge, 1999.

\bibitem[St65]{St1}
{R. Stong}:
\newblock {\em Relations among characteristic numbers, I},
\newblock Topology {\bf 4} (1965), 267-281.

\bibitem[St66]{St2}
{R. Stong}:
\newblock {\em Relations among characteristic numbers, II},
\newblock Topology {\bf 5} (1966), 133-148.

\bibitem[St68]{St}
{R.E. Stong}:
\newblock {\em Notes on cobordism theory},
\newblock Princeton University Press, Princeton, NJ; University of Tokyo Press, Tokyo, 1968.

\bibitem[Su09]{Su1}
{Z. Su}:
\newblock {\em  Rational homotopy type of manifolds},
\newblock  Thesis (Ph.D.)-Indiana University. 2009.

\bibitem[Su14]{Su2}
{Z. Su}:
\newblock {\em  Rational analogs of projective planes},
\newblock  Algebr. Geom. Topol. {\bf 14} (2014), 421-438.

\bibitem[Su22]{Su3}
{Z. Su}:
\newblock {\em  Almost complex manifold with Betti number $b_i=0$ except $i=0,n/2,n$},
\newblock  arXiv:2204.04800.


\bibitem[Sul77]{Sul}
{D. Sullivan}:
\newblock {\em Infinitesimal computations in topology},
\newblock Inst. Hautes \'{e}tudes Sci. Publ. Math. (1977), 269-331.

\end{thebibliography}
\end{document}